%%%%%%%%%%%%%%%%%%%%%%%%%%%%%%%%%%%%%%%%%%%%%%%%%%%%%%%%%%%%%%%%%%%%%%%%%%
%%%%%%%%%%%%%%%%%%%%%%%%%%%%%%%%%%%%%%%%%%%%%%%%%%%%%%%%%%%%%%%%%%%%%%%%%%%%
\documentclass[12pt,twoside]{amsart}
\usepackage{amsmath}
\usepackage{amsthm}
\usepackage{amsfonts}
\usepackage{amssymb}
\usepackage{latexsym}
\usepackage{mathrsfs}
\usepackage{amsmath}
\usepackage{amsthm}
\usepackage{amsfonts}
\usepackage{amssymb}
\usepackage{latexsym}
\usepackage{geometry}
\usepackage{dsfont}
\usepackage[dvips]{graphicx}
\usepackage{color}
\usepackage[all]{xy}

\date{}
\pagestyle{plain}
%%%%%%%%%%%%%%%%%%%%%%%%%%%%%%%%%%%%%%%%%%%%%%%%%%%%%%%%%%%%%%%%%%%%
\textheight= 22 true cm \textwidth =15 true cm
\allowdisplaybreaks[4] \footskip=15pt
\renewcommand{\uppercasenonmath}[1]{}

\topmargin=27pt \evensidemargin0pt \oddsidemargin0pt
%\headheight7pt
%\headsep12pt
%\marginparwidth0pt
%\marginparsep0pt
%\footskip0pt
%\footnotesep0pt
%%%%%%%%%%%%%%%%%%%%%%%%%%%%%%%%%%%%%%%%%%%%%%%%%%%%%%%%%%%%%%%%%%%
\usepackage{graphicx,amssymb}
\usepackage[all]{xy}
\usepackage{amsmath}

\allowdisplaybreaks
\usepackage{amsthm}
\usepackage{color}

\theoremstyle{plain}
\newtheorem{theorem}{Theorem}[section]
\newtheorem{proposition}[theorem]{Proposition}
\newtheorem{lemma}[theorem]{Lemma}
\newtheorem{corollary}[theorem]{Corollary}
\theoremstyle{definition}
\newtheorem{example}[theorem]{Example}
\newtheorem{definition}[theorem]{Definition}

\theoremstyle{definition}

\theoremstyle{remark}

%%%%%%%%%%%%%%%%%%%%%%%%%%%%%%%%%%%%%%%%%%%%%%%%%%%%%%%%%%%%%%%%%%%%

%%%%%%%%%%%%%%%%%%%%%%%%%%%%%%%%%%%%%%%%%%%%%%%%%%%%%%%%%%%%%%%%%%%
%%%%%%%%%%%%%%%%%%%%%%%%%%%%%%%%%%%%%%%%%%%%%%%%%%%%%%%%%%%%%%%%%%%%%%%%%

%%%%%%%%%%%%%%%%%%%%%%%%%%%%%%%%%%%%%%%%%%%%%%%%%%%%%%%%%%%%%%%%%%%
%%%%%%%%%%%%%%%%%%%%%%%%%%%%%%%%%%%%%%%%%%%%%%%
%%%%%%%%%%%%%%%%%%%%%%%%%%%%%%%%%%%%%%%%%%%%

\newcommand{\pf}{\noindent\begin {proof}}
\newcommand{\epf}{\end{proof}}

\newcommand{\Ker}{\mbox{\rm Ker}}
\newcommand{\Ext}{\mbox{\rm Ext}}
\newcommand{\Hom}{\mbox{\rm Hom}}
\newcommand{\Tor}{\mbox{\rm Tor}}

\newcommand{\C}{\mathcal{C}}

\newcommand{\prodi}{\prod_{i\in I}}

\newcommand{\Id}{\mathrm{Id}}

\def\ra{\rightarrow}

\def\Hom{{\rm Hom}}
\def\Ext{{\rm Ext}}
\def\Tor{{\rm Tor}}

 %%%ж¨ÒåÃüÁî

\def\m{{\frak m}}
\def\p{{\frak p}}

\def\Ker{{\rm Ker}}

\def\Im{{\rm Im}}
\def\Coker{{\rm Coker}}

\def\Spec{{\rm Spec}}
\def\Max{{\rm Max}}

%%%%%%%%%%%%%%%%%%%%%%%%%%%%%%%%%%%%%%%%%%%%%%%

\begin{document}
\begin{center}
{\large  \bf On uniformly $S$-coherent rings}

\vspace{0.5cm}  \ Xiaolei Zhang$^{a}$%,\ Wei Qi$^{a}$

%\bigskip

{\footnotesize %a.\ \ School of Mathematical Sciences, Qufu Normal University, Jining 	273165, China\\
a.\ \ School of Mathematics and Statistics, Shandong University of Technology, Zibo 255049, China\\
  E-mail: zxlrghj@163.com
%b.\ \ School of Mathematical Sciences, Sichuan Normal University, Chengdu 610068, China\\
}
\end{center}
%\begin{figure}[b]
%\rule[-2.5truemm]{5cm}{0.1truemm}\\[2mm]
%{\small }
%\end{figure}

%\begin{figure}[b]
%\rule[-2.5truemm]{5cm}{0.1truemm}\\[2mm]
%{\small }
%\end{figure}
\bigskip
\centerline { \bf  Abstract}
\bigskip
\leftskip10truemm \rightskip10truemm \noindent

In this paper, we introduce and study  the notions of uniformly $S$-finitely presented modules and uniformly $S$-coherent rings (modules) which are ``uniform'' versions of ($c$-)$S$-finitely presented modules and ($c$-)$S$-coherent rings (modules)  introduced by Bennis and Hajoui \cite{bh18}. Among the results, uniformly $S$-versions of Chase's result, Chase Theorem  and Matlis Theorem  are obtained.
\vbox to 0.3cm{}\\
{\it Key Words:}   uniformly $S$-coherent ring; uniformly $S$-finitely presented module; uniformly $S$-coherent modules; uniformly $S$-flat module; uniformly $S$-injective module.\\
{\it 2020 Mathematics Subject Classification Code:} 13C12,13E99.

\leftskip0truemm \rightskip0truemm
\bigskip
%\section { \bf Introduction    }
%\bigskip
\section{Introduction}
Throughout this paper, all rings are commutative with identity. Let $R$ be a ring.  For a subset $U$ of  an $R$-module $M$, we denote by $\langle U\rangle$ the submodule of $M$ generated by $U$. A subset $S$ of $R$ is called a multiplicative subset of $R$ if $1\in S$ and $s_1s_2\in S$ for any $s_1\in S$, $s_2\in S$.

The study of commutative rings in terms of multiplicative sets began with  Anderson and Dumitrescu \cite{ad02}, who introduced the notion of $S$-Noetherian rings. Recall that a ring $R$ is called an $S$-Noetherian ring if for any ideal $I$ of $R$, there is a finitely generated sub-ideal $K$ of $I$  such that $sI\subseteq K$ for some $s\in S$.   Cohen's Theorem, Eakin-Nagata Theorem and Hilbert Basis Theorem for $S$-Noetherian rings are also given in \cite{ad02}. However, the element $s\in S$ in the definition of $S$-Noetherian rings is not ``uniform'' in general. This situation make it difficult to study $S$-Noetherian rings via module-theoretic methods. To overcome this difficulty, Qi et al. \cite{QKWCZ21} defined uniformly $S$-Noetherian rings as $S$-Noetherian rings in which definition the choice of $s$ is fixed. Then they characterized uniformly $S$-Noetherian rings using $u$-$S$-injective modules.

Recall from \cite{g} that  a ring $R$ is said to be a coherent ring provided that any finitely generated ideal is finitely presented. The notion of coherent rings, which is a generalization of Noetherian rings,  is  another important rings defined by finiteness condition.  Many algebraists studied coherent rings in terms of  various of modules.  Early in 1960, Chase \cite[Theorem 2.1]{C60} showed that a ring is coherent exactly when the class of flat modules is closed under direct product. In 1970 Stenstr\"{o}m \cite[Theorem 3.2]{S70} obtained coherent rings are rings when any direct limits of  absolutely pure modules is  absolutely pure. In 1982, Matlis \cite[Theorem 1]{M82} proved that a ring $R$ is  coherent   if and only if $\Hom_R(M,E)$ is  flat for any injective modules $M$ and $E$.

To extend coherent rings by multiplicative sets, Bennis et al. \cite{bh18} introduced the notions of $S$-coherent rings and $c$-$S$-coherent rings. They also gave an $S$-version of Chase's result to characterize $S$-coherent rings using ideals.  Recently, the authors in this paper et al.\cite{QZZ22} characterized $S$-coherent rings in terms of $S$-Mittag-Leffler modules and  $S$-flat modules (which can be seen as flat modules by localizing at $S$).

The main motivation of this paper is to introduce and study the ``uniform'' version of  $S$-coherent rings for extending uniformly $S$-Noetherian rings. The organization of the paper is as follows: In Section 2, we introduce and study uniformly $S$-finitely presented modules and their connections with $u$-$S$-flat modules and $u$-$S$-projective modules (see Proposition \ref{usfp+usf-usp}). In Section 3, we introduce uniformly $S$-coherent modules and uniformly $S$-coherent rings. In particular, we study the ideal-theoretic characterizations of uniformly $S$-coherent rings (see Proposition \ref{nonn-coh}). Moreover examples of $S$-coherent rings and $c$-$S$-coherent rings which are not uniformly $S$-coherent of are provided (see Example \ref{not-uscoh}). In Section 4, Chase Theorem  and Matlis Theorem for uniformly $S$-coherent rings  are obtained (see Theorem \ref{w-coh-chase} and Theorem \ref{phi-coh-fp}).

Since the paper involves uniformly torsion theory, we give a quick review (see  \cite{zwz21} for more details). An $R$-module $T$ is called  $u$-$S$-torsion (with respect to $s$) provided that there exists  $s\in S$ such that $sT=0$. An $R$-sequence $\cdots\rightarrow A_{n-1}\xrightarrow{f_n} A_{n}\xrightarrow{f_{n+1}} A_{n+1}\rightarrow\cdots$ is $u$-$S$-exact, if for any $n$ there is an element $s\in S$ such that $s\Ker(f_{n+1})\subseteq \Im(f_n)$ and $s\Im(f_n)\subseteq \Ker(f_{n+1})$.
An $R$-sequence $0\rightarrow A\xrightarrow{f} B\xrightarrow{g} C\rightarrow 0$ is called a short $u$-$S$-exact sequence (with respect to $s$), if $s\Ker(g)\subseteq \Im(f)$ and $s\Im(f)\subseteq \Ker(g)$ for some $s\in S$. An $R$-homomorphism $f:M\rightarrow N$ is an \emph{$u$-$S$-monomorphism} $($resp.,   \emph{$u$-$S$-epimorphism}, \emph{$u$-$S$-isomorphism}$)$  (with respect to $s$) provided $0\rightarrow M\xrightarrow{f} N$   $($resp., $M\xrightarrow{f} N\rightarrow 0$, $0\rightarrow M\xrightarrow{f} N\rightarrow 0$ $)$ is  $u$-$S$-exact  (with respect to $s$).
Suppose $M$ and $N$ are $R$-modules. We say $M$ is $u$-$S$-isomorphic to $N$ if there exists a $u$-$S$-isomorphism $f:M\rightarrow N$. A family $\C$  of $R$-modules  is said to be closed under $u$-$S$-isomorphisms if $M$ is $u$-$S$-isomorphic to $N$ and $M$ is in $\C$, then $N$ is  also in  $\C$. One can deduce from the following \cite[Lemma 2.1]{zwz21-p} that the existence of $u$-$S$-isomorphisms of two $R$-modules is actually an equivalence relationship.

\section{uniformly $S$-finitely presented modules}

Recall from \cite{ad02} that an $R$-module $M$ is called  $S$-finite (with respective to $s$) provided that there is an element $s\in S$ and a finitely generated $R$-module $F$ such that $sM\subseteq F\subseteq M$. Trivially,  $S$-finite modules are generalizations of finitely generated modules.
For generalizing finitely presented $R$-modules, Bennis et al. \cite{bh18} introduced the notions of $S$-finitely presented modules and $c$-$S$-finitely presented modules. Following \cite[Definition 2.1]{bh18} that an  $R$-module $M$ is called $S$-finitely presented
provided that there exists an exact sequence of $R$-modules $0\rightarrow K\rightarrow F\rightarrow M\rightarrow 0$ with $K$ $S$-finite and $F$ finitely generated free. Certainly, an  $R$-module $M$  is $S$-finitely presented if and only if   there exists an exact sequence of $R$-modules $0\rightarrow T_1\rightarrow N\rightarrow M\rightarrow 0$ with $N$ finitely presented and $sT_1=0$  for some $s\in S$. Following \cite[Definition 4.1]{bh18} that an  $R$-module $M$ is called $c$-$S$-finitely presented
provided that there exists a finitely presented submodule $N$ of $M$ such that $sM\subseteq N\subseteq M$ for some $s\in S$.  Trivially, an  $R$-module $M$ is called $c$-$S$-finitely presented if and only if  there exists an exact sequence of $R$-modules $0\rightarrow N\rightarrow M\rightarrow T_2\rightarrow 0$ with $N$ finitely presented and $sT_2=0$  for some $s\in S$. Next we will give the notion of uniformly $S$-finitely presented modules which generalize both  $S$-finitely presented modules and  $c$-$S$-finitely presented modules.

\begin{definition}\label{us-FP}
Let $R$ be a ring, $S$ a multiplicative subset of $R$ and $s\in S$. An  $R$-module $M$ is called
$u$-$S$-finitely presented (abbreviates uniformly  $S$-finitely presented) (with respective to $s$) provided that there is an exact sequence $$0\rightarrow T_1\rightarrow F\xrightarrow{f} M\rightarrow T_2\rightarrow 0$$ with $F$ finitely presented and $sT_1=sT_2=0$.
\end{definition}
Trivially,   $S$-finitely presented modules and  $c$-$S$-finitely presented modules are all $u$-$S$-finitely presented.  Certainly, every
$u$-$S$-finitely presented $R$-module is $S$-finite. Indeed, since in definition \ref{us-FP} we have $sT_2=0$, so $sM\subseteq \Im(f)$. Note that the fact that $\Im(f)$ is finitely generated implies  $M$ is $S$-finite.

By \cite[Lemma 2.1]{zwz21-p}, an  $R$-module $M$ is $u$-$S$-finitely presented if and only if   there is an exact sequence $0\rightarrow T_1\rightarrow M\xrightarrow{g} F\rightarrow T_2\rightarrow 0$ with $F$ finitely presented and $s'T_1=s'T_2=0$ for some $s'\in S$. So an $R$-module $M$ is $u$-$S$-finitely presented if and only it is $u$-$S$-isomorphic to a finitely presented $R$-module.

\begin{theorem}\label{prop-usfp} Let $\Phi:0\rightarrow M\xrightarrow{f} N\xrightarrow{g} L\rightarrow 0$ be a $u$-$S$-exact sequence of $R$-modules. The following statements hold.
\begin{enumerate}
\item The class of $u$-$S$-finitely presented modules is closed under $u$-$S$-isomorphisms.
    \item If $M$ and $L$ are $u$-$S$-finitely presented, so is $N$.
       \item Any finite direct sum of $u$-$S$-finitely presented modules  is $u$-$S$-finitely presented.
 \item If $N$ is $u$-$S$-finitely presented, then  $L$ is  $u$-$S$-finitely presented if and only if  $M$ is $S$-finite.
\end{enumerate}
Moreover, if $\Phi$ is an exact sequence, the both side of conditions in  $(2)$ and $(4)$  can be taken to be  ``uniform'' with respective to a same $s\in S$.
\end{theorem}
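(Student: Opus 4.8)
The plan is to treat the four items in increasing order of difficulty, extracting the honestly-exact refinement (the ``moreover'' clause) along the way. For (1) I would argue purely formally from the characterization recorded just before the statement: an $R$-module is $u$-$S$-finitely presented precisely when it is $u$-$S$-isomorphic to a finitely presented module. Since $u$-$S$-isomorphism is an equivalence relation (the fact drawn from \cite[Lemma 2.1]{zwz21-p}), if $M$ is $u$-$S$-finitely presented and $M$ is $u$-$S$-isomorphic to $N$, then transitivity shows $N$ is $u$-$S$-isomorphic to the same finitely presented module, hence is $u$-$S$-finitely presented. Item (3) I would then obtain from the same characterization: a finite direct sum of finitely presented modules is finitely presented, and a finite direct sum of $u$-$S$-isomorphisms $f_i$ (each with respect to some $s_i$) is again a $u$-$S$-isomorphism, with respect to the product $\prod_i s_i\in S$ (which annihilates both the direct sum of the kernels and the direct sum of the cokernels). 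Thus a finite direct sum of modules each $u$-$S$-isomorphic to a finitely presented module is $u$-$S$-isomorphic to a finitely presented module. Alternatively, (3) follows from (2) applied to split exact sequences.

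The substance lies in (2) and (4), and I would first prove them when $\Phi$ is an \emph{honest} short exact sequence, since that is exactly the setting of the ``moreover'' clause and lets me track a single $s$. For (2), I would fix a common $s\in S$ witnessing that both $M$ and $L$ are $u$-$S$-finitely presented, choose finitely generated free modules $P_M$ and $P_L$ with maps $P_M\to M$ and $P_L\to L$ whose cokernels and relative kernels are killed by $s$, lift $P_L\to L$ through the genuine surjection $g$ (using the projectivity of $P_L$), and assemble a map $P_M\oplus P_L\to N$. Comparing the split sequence $0\to P_M\to P_M\oplus P_L\to P_L\to 0$ with $\Phi$ and applying the snake lemma then exhibits $N$ as $u$-$S$-isomorphic to a finitely presented module built from the presentations of $M$ and $L$, with the torsion controlled by a fixed power of $s$. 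For (4) I would use the two standard homological facts in their $u$-$S$-forms: if $N$ is $u$-$S$-finitely presented (hence $S$-finite) and $M$ is $S$-finite, then $L\cong N/f(M)$ is, up to $u$-$S$-isomorphism, a quotient of a finitely presented module by an $S$-finite submodule, and so is $u$-$S$-finitely presented; conversely, if $N$ and $L$ are both $u$-$S$-finitely presented, the $u$-$S$-analogue of ``finitely generated plus finitely presented forces the kernel to be finitely generated'' yields that $M=\ker g$ is $S$-finite.

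Finally, I would remove the honesty hypothesis: given only a $u$-$S$-exact $\Phi$, I would replace it by a genuinely exact sequence with $u$-$S$-isomorphic terms and transport the conclusions back through item (1). I expect the main obstacle to be precisely this bookkeeping of torsion. The connecting maps in a $u$-$S$-exact sequence are only $u$-$S$-epimorphisms, so generators lift only after multiplication by $s$, and each snake-lemma step can multiply the annihilator by a further factor of $s$. The crux is to verify that only a bounded number of such factors accumulate, so that a single element of $S$ (a fixed power of $s$) witnesses the $u$-$S$-finite presentation of $N$ in (2) and the $S$-finiteness of $M$ in (4); for honestly exact $\Phi$ this same control is what produces the common $s$ asserted in the ``moreover'' clause.
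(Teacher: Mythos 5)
Your items (1) and (3), your reduction of the general $u$-$S$-exact case to the honestly exact case (pass to $0\rightarrow \Ker(g)\rightarrow N\rightarrow \Im(g)\rightarrow 0$, whose terms are $u$-$S$-isomorphic to $M$, $N$, $L$, and transport through (1)), and both directions of your (4) all match the paper's proof in substance; in (4) the two facts you invoke as ``standard'' are exactly \cite[Theorem 2.4(4),(5)]{bh18}, which you should cite or prove rather than assume, since they are the $S$-statements this theorem generalizes. The genuine divergence is in (2). The paper never lifts anything: it takes a $u$-$S$-isomorphism $h\colon M\rightarrow F_1$ onto a finitely presented module and forms the push-out of $f$ and $h$, producing an exact sequence $0\rightarrow F_1\rightarrow X\rightarrow L\rightarrow 0$ with $N\rightarrow X$ a $u$-$S$-isomorphism; it then takes a $u$-$S$-isomorphism $F_2\rightarrow L$ with $F_2$ finitely presented and forms the pull-back, producing $0\rightarrow F_1\rightarrow Y\rightarrow F_2\rightarrow 0$ with $Y\rightarrow X$ a $u$-$S$-isomorphism. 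Since an extension of a finitely presented module by a finitely presented module is finitely presented, $Y$ is finitely presented and $N$ is $u$-$S$-isomorphic to it; no analysis of kernels is needed and the common witness $s$ for the ``moreover'' clause is read off at once. Your horseshoe argument, by contrast, must control the kernel of the assembled map, which is precisely what the push-out/pull-back sandwich is designed to avoid.

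That kernel control is where your sketch has a real soft spot, and it is more than ``bookkeeping of powers of $s$.'' First, your phrase ``cokernels and relative kernels killed by $s$'' can only mean: $P_M\rightarrow M$ is a $u$-$S$-epimorphism whose kernel is $S$-finite with respect to $s$ (such presentations exist: compose a finite free cover of a finitely presented $F_M$ with a $u$-$S$-isomorphism $F_M\rightarrow M$); a free module mapping onto $M$ with kernel literally annihilated by $s$ does not exist in general. Second, with $\alpha\colon P_M\rightarrow M$, $\beta\colon P_M\oplus P_L\rightarrow N$, $\gamma\colon P_L\rightarrow L$, the snake lemma gives $0\rightarrow \Ker(\alpha)\rightarrow \Ker(\beta)\rightarrow \Ker(\delta)\rightarrow 0$, where $\delta\colon \Ker(\gamma)\rightarrow \Coker(\alpha)$ is the connecting map. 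Now $\Ker(\delta)$ is merely a submodule of the $S$-finite module $\Ker(\gamma)$, and submodules of $S$-finite modules are \emph{not} $S$-finite in general -- that failure is exactly what the $S$-Noetherian condition rules out -- so the conclusion does not follow from the snake lemma alone. The rescue is that $s\Coker(\alpha)=0$ forces $s\Ker(\gamma)\subseteq \Ker(\delta)$; choosing a finitely generated $K_L\subseteq \Ker(\gamma)$ with $s\Ker(\gamma)\subseteq K_L$, one gets $sK_L\subseteq \Ker(\delta)$ finitely generated and $s^2\Ker(\delta)\subseteq sK_L$, so $\Ker(\delta)$ is $S$-finite with respect to $s^2$; since an extension of an $S$-finite module by an $S$-finite module is $S$-finite, $\Ker(\beta)$ is $S$-finite with respect to a fixed power of $s$, and quotienting $P_M\oplus P_L$ by a finitely generated witness yields the finitely presented module $u$-$S$-isomorphic to $N$ (with cokernel killed by $s^2$). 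With this argument inserted, your route through (2), and hence (3) and the uniformity clause, is complete and is a legitimate alternative to the paper's.
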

\begin{proof} (1) It follows from the fact that  an $R$-module $M$ is $u$-$S$-finitely presented if and only it is $u$-$S$-isomorphic to a finitely presented $R$-module.

(2) Since $u$-$S$-finitely presented modules are closed under $u$-$S$-isomorphisms,  we may assume $\Phi$ is an exact sequence by (1). Consider the following push-out:
$$\xymatrix@R=20pt@C=25pt{
  0 \ar[r]^{}&M \ar[d]^{h}\ar[r]^{f}&N \ar[r]^{g}\ar[d]^{l}&L\ar[r] \ar@{=}[d] &0\\
0 \ar[r]^{}&F_1 \ar[r]^{m}&X \ar[r]^{n}&L \ar[r] &0.\\}$$
with $F_2$ finitely presented, $\Ker(h)$ and $\Coker(h)$ $u$-$S$-torsion. So $l$ is also a $u$-$S$-isomorphism.
Consider the following pull-back:
$$\xymatrix@R=20pt@C=25pt{
  0 \ar[r]^{}&F_1 \ar[r]^{m}&X \ar[r]^{n}&L\ar[r]  &0\\
0 \ar[r]^{}&F_1 \ar@{=}[u]\ar[r]&Y \ar[u]^{k}\ar[r]&F_2 \ar[r]\ar[u]^{j} &0.\\}$$
with $F_2$ finitely presented, $\Ker(j)$ and $\Coker(j)$ $u$-$S$-torsion. So $k$ is also a $u$-$S$-isomorphism. Since $F_1$ and $F_2$ are finitely presented, $Y$ is also finitely presented. Hence $N$ is $u$-$S$-isomorphic to a  finitely presented $R$-module, and thus is $u$-$S$-finitely presented.

$(3)$ Follows from (2).

$(4)$ Since $u$-$S$-finitely presented modules and $S$-finite modules are closed under $u$-$S$-isomorphisms respectively, we may assume $\Phi$ is an exact sequence by (1). Suppose $M$ is $S$-finite. Since $N$ is $u$-$S$-finitely presented,  there is an exact sequence $0\rightarrow T_1\rightarrow F\xrightarrow{l} N\rightarrow T_2\rightarrow 0$ with  $F$ finitely presented and $sT_1=sT_2=0$ for some $s\in S$.
Consider the following pull-back of $f$ an $l$:
$$\xymatrix@R=20pt@C=25pt{
  0 \ar[r]^{}&M \ar[r]^{f}&N \ar[r]^{g}&L\ar[r]  &0\\
0 \ar[r]^{}&Z\ar[r]\ar[u]^{s}&F \ar[u]^{l}\ar[r]&K\ar[u]^{t} \ar[r] &0.\\}$$
Since $l$ is a $u$-$S$-isomorphism, $s$ and $t$ are both  $u$-$S$-isomorphisms. So $Z$ is also  $S$-finite.
Note that $L$ is   $u$-$S$-isomorphic to $K$ which is $u$-$S$-finitely presented (see \cite[Theorem 2.4(4)]{bh18}). So $L$ is  $u$-$S$-finitely presented. Suppose $L$ is  $u$-$S$-finitely presented. Considering the above pull-back, we have  $K$ is also $S$-finitely presented. Hence $Z$ is  $S$-finite by  \cite[Theorem 2.4(5)]{bh18}  which implies that $M$ is also $S$-finite.

The ``Moreover'' part can be checked by the proof of $(2)$ and $(4)$.
\end{proof}

Recall from \cite{brt18} that an $R$-module $M$ is said to be $S$-Noetherian provided that any submodule of $M$ is $S$-finite. A ring $R$ is called $S$-Noetherian if $R$ itself is  $S$-Noetherian  $R$-module.
\begin{proposition}\label{snoe}Let $R$ be a ring and $S$ a multiplicative subset of $R$. Then a ring $R$ is $S$-Noetherian if and only if any $S$-finite module is $u$-$S$-finitely presented.
\end{proposition}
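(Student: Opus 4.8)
The plan is to prove both implications, leaning on Theorem \ref{prop-usfp} and on the standard permanence fact that finitely generated modules over an $S$-Noetherian ring are again $S$-Noetherian.

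For the forward direction, assume $R$ is $S$-Noetherian and let $M$ be $S$-finite, so that there exist $s\in S$ and a finitely generated submodule $N$ with $sM\subseteq N\subseteq M$. First I would present $N$ via a short exact sequence $0\rightarrow K\rightarrow R^n\rightarrow N\rightarrow 0$ with $R^n$ finitely generated free. The key step is to argue that $K$ is $S$-finite: since $R$ is $S$-Noetherian, every finitely generated $R$-module, in particular $R^n$, is $S$-Noetherian, so its submodule $K$ is $S$-finite. This makes $N$ an $S$-finitely presented module, hence $u$-$S$-finitely presented. Finally, the inclusion $N\hookrightarrow M$ has zero kernel and cokernel $M/N$ annihilated by $s$, so it is a $u$-$S$-isomorphism; by Theorem \ref{prop-usfp}$(1)$ the class of $u$-$S$-finitely presented modules is closed under $u$-$S$-isomorphisms, whence $M$ is $u$-$S$-finitely presented.

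For the reverse direction, assume every $S$-finite module is $u$-$S$-finitely presented and let $I$ be an arbitrary ideal of $R$. The idea is to apply Theorem \ref{prop-usfp}$(4)$ to the exact sequence $0\rightarrow I\rightarrow R\rightarrow R/I\rightarrow 0$, which is in particular $u$-$S$-exact. Here the middle term $R$ is finitely presented, hence $u$-$S$-finitely presented, while the quotient $R/I$ is cyclic, hence finitely generated, hence $S$-finite, and therefore $u$-$S$-finitely presented by hypothesis. Theorem \ref{prop-usfp}$(4)$ then forces the submodule $I$ to be $S$-finite. Since $I$ was arbitrary, $R$ is $S$-Noetherian.

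The main obstacle is the forward direction's appeal to the permanence of the $S$-Noetherian property under passage to finitely generated modules and their submodules; everything else is bookkeeping with $u$-$S$-isomorphisms. I would cite the relevant result from \cite{ad02} (or \cite{brt18}) rather than reprove that finitely generated modules over an $S$-Noetherian ring are $S$-Noetherian, which itself follows by induction on the number of generators using stability of the $S$-Noetherian property under extensions.
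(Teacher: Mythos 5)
Your proof is correct and takes essentially the same approach as the paper: both directions rest on the same two ingredients, namely the permanence fact from \cite{brt18} that finitely generated free modules over an $S$-Noetherian ring are $S$-Noetherian, and the bookkeeping of Theorem \ref{prop-usfp}. The only cosmetic difference is in the forward direction, where the paper applies Theorem \ref{prop-usfp}(4) directly to the $u$-$S$-exact sequence $0\rightarrow \Ker(f)\rightarrow F\rightarrow M\rightarrow 0$ coming from a $u$-$S$-epimorphism $f:F\rightarrow M$, whereas you present the finitely generated submodule $N$ by an honest exact sequence and then pass to $M$ via a $u$-$S$-isomorphism using part (1); the backward directions are identical.
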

\begin{proof} For necessity, let $M$ be an $S$-finite module. Then there is a $u$-$S$-epimorphism $f:F\rightarrow M$ with $F$ finitely generated free. Since $R$ is an $S$-Noetherian ring, we have $F$ is also $S$-Noetherian (see \cite{brt18}). Hence $M$ is $u$-$S$-finitely presented by Theorem \ref{prop-usfp}(4). For sufficiency, let $I$ be an ideal of $R$. Then $R/I$ is $S$-finite, and thus $u$-$S$-finitely presented. By  Theorem \ref{prop-usfp}(4) again, $I$ is $S$-finite.
\end{proof}

\begin{proposition}\label{sfp-csfp-usfp} Let $R$ be a ring, $S$ a multiplicative subset of $R$ consisting of finite elements. Then an $R$-module $M$ is a $u$-$S$-finitely presented $R$-module if and only if $M_S$ is a finitely  presented $R_S$-module.
\end{proposition}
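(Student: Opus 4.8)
The plan is to prove both implications through the characterization, recorded just before the statement, that an $R$-module is $u$-$S$-finitely presented precisely when it is $u$-$S$-isomorphic to a finitely presented $R$-module. The hypothesis that $S$ consists of finitely many elements will enter through one crucial observation: setting $s_0:=\prod_{s\in S}s$, we have $s_0\in S$ and every element of $S$ divides $s_0$, so any $R$-module $T$ with $T_S=0$ is annihilated by $s_0$ and is therefore $u$-$S$-torsion. Consequently, for such $S$ an $R$-homomorphism $f$ is a $u$-$S$-isomorphism if and only if its localization $f_S$ is an isomorphism of $R_S$-modules, since $\Ker f$ and $\Coker f$ are $u$-$S$-torsion exactly when $(\Ker f)_S=(\Coker f)_S=0$. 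This equivalence is the engine of the argument.

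For the forward direction I would argue as follows. If $M$ is $u$-$S$-finitely presented, there is a $u$-$S$-isomorphism $f$ between $M$ and a finitely presented module $F$. Localizing at $S$ is exact and annihilates $u$-$S$-torsion, so $f_S$ is an isomorphism and $M_S\cong F_S$. Because the localization of a finitely presented module is finitely presented over the localized ring, $F_S$, and hence $M_S$, is finitely presented over $R_S$. (This implication in fact uses no hypothesis on $S$.)

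For the converse, assume $M_S$ is finitely presented over $R_S$, and proceed in three steps. First, clearing denominators in a finite presentation matrix of $M_S$ produces a finitely presented $R$-module $P$ with an $R_S$-isomorphism $P_S\cong M_S$. Second, I would lift this isomorphism to an $R$-homomorphism $\phi\colon P\to M$ with $\phi_S$ an isomorphism: since $P$ is finitely presented one has $\Hom_R(P,M)_S\cong\Hom_{R_S}(P_S,M_S)$, so the given isomorphism is represented by some $\phi/s$, and after scaling by the unit $s\in R_S$ the map $\phi_S$ becomes an isomorphism. Third, exactness of localization yields $(\Ker\phi)_S=0$ and $(\Coker\phi)_S=0$; by the finiteness observation both are annihilated by $s_0$, so $\phi$ is a $u$-$S$-isomorphism. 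Hence $M$ is $u$-$S$-isomorphic to the finitely presented module $P$, and is therefore $u$-$S$-finitely presented.

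I expect the main obstacle to be precisely the third step: the kernel and cokernel of $\phi$ need not be finitely generated, so the passage from ``$S$-torsion'' to ``$u$-$S$-torsion'' is not automatic and would genuinely fail for an arbitrary multiplicative set. It is exactly here that the finiteness of $S$ is indispensable, since it furnishes the uniform annihilator $s_0=\prod_{s\in S}s$. By contrast, the lifting in the second step is routine once one invokes the compatibility of $\Hom$ with localization for a finitely presented source, and the construction of $P$ in the first step is the standard clearing of denominators in a presentation matrix.
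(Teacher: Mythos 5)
Your proof is correct, and while your forward direction coincides with the paper's (localize the defining four-term sequence / $u$-$S$-isomorphism and kill the torsion ends), your converse takes a genuinely different route. The paper argues from the module side: it clears denominators only on a finite generating set of $M_S$ to build a map $f\colon R^n\to M$, checks $f$ is a $u$-$S$-epimorphism, descends finitely many generators of $\Ker(f_S)$ to conclude that $\Ker(f)$ is $S$-finite, and then invokes the closure properties of $u$-$S$-finitely presented modules under ($u$-$S$-)exact sequences (Theorem 2.2) to finish. You instead clear denominators in the whole presentation matrix to manufacture a finitely presented model $P$ with $P_S\cong M_S$, lift that isomorphism to $\phi\colon P\to M$ via the Hom-localization isomorphism $\Hom_R(P,M)_S\cong\Hom_{R_S}(P_S,M_S)$ for finitely presented $P$ (Lenzing's result, which the paper cites elsewhere but not here), and then verify directly that $\phi$ is a $u$-$S$-isomorphism. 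Both arguments pivot on the same finiteness observation — with $s_0=\prod_{s\in S}s$, any module $T$ with $T_S=0$ satisfies $s_0T=0$ — which, as you rightly stress, is exactly where arbitrary multiplicative sets would fail. What your route buys is a clean, reusable principle (for finite $S$, a homomorphism is a $u$-$S$-isomorphism iff it becomes an isomorphism after localization) and independence from Theorem 2.2; what the paper's route buys is avoidance of the Hom-localization machinery, staying at the level of generators and kernels plus its own already-established closure theorem. One cosmetic remark: the paper's own write-up slips into saying ``finitely generated'' where ``finitely presented'' is meant, and cites Theorem 2.2(2) where (4) is the part actually needed; your version has no such ambiguities.
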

\begin{proof} Suppose $M$ is a $u$-$S$-finitely presented $R$-module. there is an exact sequence $0\rightarrow T_1\rightarrow N\xrightarrow{f} M\rightarrow T_2\rightarrow 0$ with $N$ finitely presented and $sT_1=sT_2=0$. Localizing at $S$, we have $0\rightarrow (T_1)_S\rightarrow N_S\xrightarrow{f} M_S\rightarrow (T_2)_S\rightarrow 0$. Since $sT_1=sT_2=0$, $(T_1)_S=(T_2)_S=0$. So $M_S\cong N_S$ is  a finitely generated $R_S$-module. On the other hand, suppose $M_S$ is a finitely generated $R_S$-module. Suppose $S=\{s_1,\cdots,s_n\}$ and set $s=s_1\cdots s_n$. We may assume that $M_S$ is generated by $\{\frac{m_1}{s},\cdots,\frac{m_n}{s}\}$. Consider the $R$-homomorphism $f:R^n\rightarrow M$ satisfying $f(e_i)=m_i$ for each $i=1,\cdots,n$. It is easy to verify $f$ is a $u$-$S$-epimorphism. Consider the exact sequence $0\rightarrow \Ker(f_S)\rightarrow R^n_S\xrightarrow{f_S} M_S\rightarrow 0$. Then $\Ker(f_S)$ is a finitely generated $R_S$-module, and thus $\Ker(f)$ is $S$-finite. By Theorem \ref{prop-usfp}(2), $M$ is $u$-$S$-finitely presented.
\end{proof}

Let $\p$ be a prime ideal of $R$. We say an $R$-module $M$  is (simply)  \emph{$\p$-finite} provided  $R$ is $(R\setminus\p)$-finite. We always denote by $\Spec(R)$ the spectrum of all prime ideals of $R$, and $\Max(R)$ the set of all maximal ideals of $R$, respectively.

\begin{lemma}\label{char-fg} Let $R$ be a ring, $S$ a multiplicative subset of $R$ and $M$ an $R$-module. The following statements are equivalent:
\begin{enumerate}
    \item $M$ is finitely generated $R$-module;
     \item $M$ is  $\p$-finite for any $\p\in\Spec(R)$;
      \item $M$ is  $\m$-finite for any $\m\in\Max(R)$.
\end{enumerate}
\end{lemma}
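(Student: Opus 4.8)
The plan is to prove the cycle $(1)\Rightarrow(2)\Rightarrow(3)\Rightarrow(1)$, where the first two implications are purely formal and all the real content sits in the last one. For $(1)\Rightarrow(2)$ I would simply observe that if $M$ is finitely generated, then for every prime $\p$ the choice $s=1\in R\setminus\p$ together with $F=M$ witnesses $s M\subseteq F\subseteq M$ with $F$ finitely generated, so $M$ is $\p$-finite. The implication $(2)\Rightarrow(3)$ is immediate, since every maximal ideal is in particular prime.

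For $(3)\Rightarrow(1)$, assume $M$ is $\m$-finite for every $\m\in\Max(R)$. Unwinding the definition, for each maximal ideal $\m$ there exist an element $s_\m\in R\setminus\m$ and a finitely generated submodule $F_\m\subseteq M$ with $s_\m M\subseteq F_\m\subseteq M$. The key step is a partition-of-unity argument on the denominators: let $I$ be the ideal of $R$ generated by the family $\{s_\m : \m\in\Max(R)\}$. Because $s_\m\notin\m$ for each $\m$, the ideal $I$ is contained in no maximal ideal of $R$, and therefore $I=R$. Consequently $1$ admits a finite expression $1=\sum_{i=1}^n r_i s_{\m_i}$ with $r_i\in R$ and $\m_1,\dots,\m_n\in\Max(R)$.

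I would then set $F=\sum_{i=1}^n F_{\m_i}$, a finite sum of finitely generated submodules of $M$, hence finitely generated. To see $M=F$, take any $m\in M$ and write $m=1\cdot m=\sum_{i=1}^n r_i s_{\m_i} m$; since $s_{\m_i} m\in s_{\m_i} M\subseteq F_{\m_i}\subseteq F$ and $F$ is a submodule, each term $r_i s_{\m_i} m$ lies in $F$, whence $m\in F$. Thus $M=F$ is finitely generated.

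The main obstacle, and in fact essentially the only nontrivial point, is recognizing how to glue the infinitely many local finiteness witnesses into a single finite one: the ``uniform denominators'' $s_\m$ generate the unit ideal precisely because each avoids its own maximal ideal, and this lets a finite subfamily already cover all of $M$. This is the familiar local-to-global maneuver, and once it is in place the rest is a short verification.
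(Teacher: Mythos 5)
Your proof is correct and follows essentially the same route as the paper: the implications $(1)\Rightarrow(2)\Rightarrow(3)$ are dismissed as formal, and $(3)\Rightarrow(1)$ uses exactly the paper's argument that the ideal generated by the witnesses $\{s_{\m}\}$ lies in no maximal ideal, hence equals $R$, so a finite subfamily $s_{\m_1},\dots,s_{\m_n}$ yields $M=\langle s_{\m_1},\dots,s_{\m_n}\rangle M\subseteq F_{\m_1}+\cdots+F_{\m_n}\subseteq M$. Your write-up merely spells out the partition-of-unity expansion $1=\sum_i r_i s_{\m_i}$ that the paper leaves implicit.
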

\begin{proof} $(1)\Rightarrow (2)\Rightarrow (3)$ Trivial.

$(3)\Rightarrow (1)$  For each  $\m\in \Max(R)$, there exists an element $s^{\m}\in R\setminus\m$ and a finitely generated submodule $F^{\m}$ of $M$ such that $s^{\m}M\subseteq F^{\m}$. Since $\{s^{\m} \mid \m \in \Max(R)\}$ generated $R$, there exist finite elements $\{s^{\m_1},...,s^{\m_n}\}$ such that $\langle s^{\m_1},...,s^{\m_n}\rangle=R$. So $M=\langle s^{\m_1},...,s^{\m_n}\rangle M\subseteq F^{\m_1}+...+F^{\m_n}\subseteq M$. Hence $M=F^{\m_1}+...+F^{\m_n}$. It follows that $M$ is finitely generated.
\end{proof}

Let $\p$ be a prime ideal of $R$. We say an $R$-module $M$  is (simply)  \emph{$u$-$\p$-finitely presented} provided  $R$ is $u$-$(R\setminus\p)$-finitely presented.
\begin{proposition}\label{char-fp} Let $R$ be a ring, $S$ a multiplicative subset of $R$ and $M$ an $R$-module. The following statements are equivalent:
\begin{enumerate}
    \item $M$ is a finitely presented $R$-module;
     \item $M$ is  $u$-$\p$-finitely presented for any $\p\in\Spec(R)$;
      \item $M$ is  $u$-$\m$-finitely presented for any $\m\in\Max(R)$.
\end{enumerate}
\end{proposition}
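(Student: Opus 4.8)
The plan is to prove the equivalences by the standard pattern of a ``local-global'' characterization. The implications $(1)\Rightarrow(2)\Rightarrow(3)$ should be entirely routine: if $M$ is finitely presented, then for any prime $\p$ the finitely presented module $M$ is trivially $u$-$(R\setminus\p)$-finitely presented (take the identity, i.e.\ the trivial four-term sequence with zero torsion, so we may pick $s=1\in R\setminus\p$), and specializing from all primes to all maximal ideals gives $(2)\Rightarrow(3)$ for free. So all the content lies in the implication $(3)\Rightarrow(1)$.

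For $(3)\Rightarrow(1)$, the first step is to extract, for each maximal ideal $\m$, a witness to the $u$-$\m$-finite-presentation. By the reformulation noted after Definition \ref{us-FP} (an $R$-module is $u$-$S$-finitely presented iff it is $u$-$S$-isomorphic to a finitely presented module), and by unwinding the definition with $S=R\setminus\m$, there is an element $s^{\m}\in R\setminus\m$ and a four-term exact sequence $0\rightarrow T_1^{\m}\rightarrow F^{\m}\xrightarrow{f^{\m}} M\rightarrow T_2^{\m}\rightarrow 0$ with $F^{\m}$ finitely presented and $s^{\m}T_1^{\m}=s^{\m}T_2^{\m}=0$. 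In particular $M$ is $\m$-finite for every $\m$, so Lemma \ref{char-fg} already gives that $M$ is finitely generated. The remaining work is to upgrade ``finitely generated'' to ``finitely presented,'' i.e.\ to show the kernel of some (hence any) finite presentation $R^{n}\twoheadrightarrow M$ is finitely generated.

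The key step is therefore to control this kernel locally. Fix a surjection $\pi:R^{n}\rightarrow M$ with kernel $K$, and for each $\m$ compare $\pi$ with the local witness $f^{\m}:F^{\m}\rightarrow M$. The idea is that multiplication by $s^{\m}$ forces a genuine (honest, not merely $u$-$S$) factorization on the nose after clearing the torsion: since $s^{\m}T_2^{\m}=0$ we have $s^{\m}M\subseteq\im(f^{\m})$, and since $s^{\m}T_1^{\m}=0$ the map $f^{\m}$ restricted to an $s^{\m}$-multiple becomes injective modulo $\im$-issues, so that $s^{\m}K$ lands inside a finitely generated piece coming from the finite presentation of $F^{\m}$. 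Concretely I would argue that $s^{\m}K\subseteq K^{\m}$ for some finitely generated submodule $K^{\m}$ of $K$; this is exactly the statement that $K$ is $\m$-finite, via the two pull-back/pull-out diagrams used in Theorem \ref{prop-usfp}. The cleanest way to see it is to invoke Theorem \ref{prop-usfp}(4) directly: apply it to the honest short exact sequence $0\rightarrow K\rightarrow R^{n}\rightarrow M\rightarrow 0$ with $S=R\setminus\m$; since $R^{n}$ is finitely presented (hence $u$-$\m$-finitely presented) and $M$ is $u$-$\m$-finitely presented, part (4) yields that $K$ is $(R\setminus\m)$-finite, i.e.\ $\p$-finite at $\m$. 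As this holds for every $\m\in\Max(R)$, Lemma \ref{char-fg} applies a second time, now to $K$, giving that $K$ is finitely generated; hence $M$ is finitely presented, completing $(3)\Rightarrow(1)$.

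The main obstacle I anticipate is not the global patching—Lemma \ref{char-fg} handles that mechanically—but rather ensuring that $u$-$\m$-finite presentation of $M$ transfers to $(R\setminus\m)$-finiteness of the kernel $K$ of the \emph{particular} finite presentation $R^{n}\twoheadrightarrow M$. The danger is that the finite presentation $F^{\m}$ furnished locally need not be the global $R^{n}$, so one must check that a genuine global generating set of $M$ (guaranteed once $M$ is finitely generated) can be used uniformly, and that Theorem \ref{prop-usfp}(4) is being applied to an \emph{honest} exact sequence so that its conclusion about $S$-finiteness is valid. Once the first application of Lemma \ref{char-fg} secures finite generation of $M$, this subtlety dissolves: we have a single $R^{n}\twoheadrightarrow M$ valid over all of $R$, and the local conclusion ``$K$ is $\m$-finite'' is obtained at each $\m$ independently, after which the second application of Lemma \ref{char-fg} finishes the argument.
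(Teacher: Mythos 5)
Your proposal is correct and follows essentially the same route as the paper: both first use Lemma \ref{char-fg} to get $M$ finitely generated, then apply Theorem \ref{prop-usfp}(4) to an honest presentation $0\rightarrow K\rightarrow R^{n}\rightarrow M\rightarrow 0$ to conclude $K$ is $\m$-finite at every maximal ideal, and finish with a second application of Lemma \ref{char-fg} to $K$. Your intermediate musings about clearing torsion by $s^{\m}$ are superfluous once you invoke Theorem \ref{prop-usfp}(4) directly, which is exactly what the paper does.
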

\begin{proof} $(1)\Rightarrow (2)\Rightarrow (3)$ Trivial.

$(3)\Rightarrow (1)$  By Lemma \ref{char-fg}, $M$ is finitely generated. Consider the exact sequence $0\rightarrow K\rightarrow F\rightarrow M\rightarrow 0$ with $F$ finitely generated free. By Theorem \ref{prop-usfp}, $K$ is $\m$-finite for any $\m\in\Max(R)$. So $K$ is also finitely generated, and thus $M$ is finitely presented.
\end{proof}

Let $\{M_j\}_{j\in \Gamma}$  be a family of $R$-modules and $N_j$ a submodule of $M_j$ generated by $\{m_{i,j}\}_{i\in \Lambda_j}\subseteq M_j$ for each $j\in \Gamma$. Recall from \cite{zwz21} that  a family of $R$-modules  $\{M_j\}_{j\in \Gamma}$  is \emph{$u$-$S$-generated} (with respective to $s$) by  $\{\{m_{i,j}\}_{i\in \Lambda_j}\}_{j\in \Gamma}$ provided that there exists an element $s\in S$  such that $sM_j\subseteq N_j$ for each $j\in \Gamma$, where $N_j=\langle \{m_{i,j}\}_{i\in \Lambda_j}\rangle$.  We say a family of $R$-modules  $\{M_j\}_{j\in \Gamma}$ is \emph{$u$-$S$-finite} (with respective to $s$) if the set $\{m_{i,j}\}_{i\in \Lambda_j}$ can be chosen as a finite set for each $j\in \Gamma$, that is, there is $s\in S$ such that  $\{M_j\}_{j\in \Gamma}$ are all $S$-finite with respect to $s$. Recall from \cite{QKWCZ21} that an $R$-module $M$ is called a \emph{$u$-$S$-Noetherian module} provided the set of all submodules of $M$ is $u$-$S$-finite. A ring $R$ is called to be a \emph{$u$-$S$-Noetherian ring} provided that $R$ itself is a $u$-$S$-Noetherian $R$-module.

\begin{theorem}\label{usnoe}Let $R$ be a ring and $S$ a multiplicative subset of $R$. Then the following statements are equivalent:
\begin{enumerate}
    \item A ring $R$ is $u$-$S$-Noetherian;
   \item Any $S$-finite module is $u$-$S$-Noetherian;
    \item Any finitely generated module is $u$-$S$-Noetherian;
   \item There is $s\in S$ such that any finitely generated module is $u$-$S$-finitely presented with respective to $s$.
 \end{enumerate}
\end{theorem}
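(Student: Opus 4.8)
The plan is to show $(1)\Rightarrow(2)\Rightarrow(3)\Rightarrow(1)$, establishing that (1)--(3) are equivalent, and then to close the loop with $(3)\Rightarrow(4)\Rightarrow(3)$. The implications $(2)\Rightarrow(3)$ and $(3)\Rightarrow(1)$ are immediate by specialization: every finitely generated module is $S$-finite, and $R$ is finitely generated over itself. So the real content lies in $(1)\Rightarrow(2)$ and, above all, in entering (4).

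For $(1)\Rightarrow(2)$ I would first isolate two elementary closure properties. The \emph{extension property}: if $0\to A\to B\to C\to 0$ is exact with $A$ being $u$-$S$-Noetherian with respect to $s_1$ and $C$ with respect to $s_2$, then $B$ is $u$-$S$-Noetherian with respect to $s_1s_2$. Given $N\subseteq B$ one approximates $N\cap A$ by a finitely generated submodule (using $s_1$) and the image of $N$ in $C$ by one (using $s_2$), lifts the latter generators back into $N$, and verifies that $s_1s_2N$ lands inside the finitely generated module they span. The \emph{image property}: a $u$-$S$-epimorphic image of a $u$-$S$-Noetherian module is again $u$-$S$-Noetherian, proved by pulling a submodule back along the map, approximating the preimage, and pushing forward. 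Iterating the extension property along $0\to R^{n-1}\to R^n\to R\to 0$ shows each $R^n$ is $u$-$S$-Noetherian; then for $S$-finite $M$ one has a $u$-$S$-epimorphism $R^n\to M$ (send a basis onto generators of a finite approximation of $M$), so $M$ is $u$-$S$-Noetherian. Here the witnessing element may depend on $M$, which is harmless for (2).

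The arc $(4)\Rightarrow(3)$ is where Theorem \ref{prop-usfp} carries the argument. Given $M$ finitely generated and a submodule $N$, both $M$ and $M/N$ are finitely generated, hence $u$-$S$-finitely presented with respect to the \emph{same} element $s$ furnished by (4). Applying the uniform form of Theorem \ref{prop-usfp}(4) to $0\to N\to M\to M/N\to 0$ yields that $N$ is $S$-finite with respect to an element $s'$ depending only on $s$, hence independent of $N$. Thus all submodules of $M$ are $S$-finite with a common $s'$, so $M$ is $u$-$S$-Noetherian, which is (3); taking $M=R$ and $N=I$ recovers $(4)\Rightarrow(1)$ directly.

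The crux, and the step I expect to be the main obstacle, is $(3)\Rightarrow(4)$: it is the only implication that must manufacture one element $s$ valid for \emph{all} finitely generated modules simultaneously. Reducing through a presentation $0\to K\to R^n\to M\to 0$, it suffices to produce a single $s$ such that every submodule $K$ of every $R^n$ is $S$-finite with respect to $s$; writing $sK\subseteq K_0\subseteq K$ with $K_0$ finitely generated, the module $R^n/K_0$ is then finitely presented and $u$-$S$-isomorphic to $M$ with respect to $s$. The difficulty is precisely the uniformity in the rank: the extension property only delivers that $R^n$ is $u$-$S$-Noetherian with respect to $s_0^n$, where $s_0$ witnesses (1), and this bound degrades as $n$ grows. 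Upgrading this to a witness independent of $n$, so that the whole family $\{R^n\}_{n\ge 1}$ is uniformly $u$-$S$-Noetherian with one common element, is the heart of the matter, and I would resolve it by appealing to the uniform treatment of finitely generated free modules over $u$-$S$-Noetherian rings in \cite{QKWCZ21} rather than to the naive rank induction.
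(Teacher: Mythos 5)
Your handling of everything except $(3)\Rightarrow(4)$ is correct and is essentially the paper's own argument: your extension and image properties for $u$-$S$-Noetherian modules are the content of \cite[Proposition 2.13]{QKWCZ21}, which the paper simply cites for $(1)\Rightarrow(2)$, and your $(4)\Rightarrow(3)$ is the paper's $(4)\Rightarrow(1)$ run through the uniform ``Moreover'' clause of Theorem \ref{prop-usfp}(4), applied to $0\to N\to M\to M/N\to 0$ rather than to $0\to I\to R\to R/I\to 0$.

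The genuine gap is the step you yourself flag: $(3)\Rightarrow(4)$ is never actually proved. You propose to close it by citing ``the uniform treatment of finitely generated free modules over $u$-$S$-Noetherian rings in \cite{QKWCZ21}'', i.e., a theorem producing one $s\in S$ witnessing $S$-finiteness of every submodule of every $R^n$ simultaneously, independently of $n$. No rank-uniform statement of that kind is available there: the only result from \cite{QKWCZ21} used in this circle of ideas (its Proposition 2.13) is the per-module statement, whose proof is exactly the rank induction you call naive, so invoking it merely returns the degrading witness $s_0^n$ you are trying to escape. The same compounding occurs along the other natural route, inducting on the number of generators via the uniform part of Theorem \ref{prop-usfp}(2): cyclic modules get witness $s_0$, but each extension step multiplies the witness, so $n$-generated modules get a witness depending on $n$. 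Thus the one implication that must manufacture a single $s$ valid for all finitely generated modules at once rests on a citation to a result that does not exist. You should know that the paper is no better here: its entire proof of this step is the phrase ``$(2)\Rightarrow(3)\Rightarrow(4)$ Trivial'', which silently performs the illegitimate quantifier exchange from ``for each finitely generated $M$ there is $s_M$'' to ``there is one $s$ for all $M$''. So your diagnosis of where the difficulty sits is exactly right and in fact exposes a weakness in the paper's own proof; but as a proof attempt, yours leaves that crucial step unestablished.
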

\begin{proof} $(1)\Rightarrow (2)$ Let $M$ be an $S$-finite module. Then there is a $u$-$S$-epimorphism $f:F\rightarrow M$ with $F$ finitely generated free. Since $R$ is $u$-$S$-Noetherian, we have $F$ is also $u$-$S$-Noetherian, and so is $M$ (see \cite[Proposition 2.13]{QKWCZ21}).

 $(2)\Rightarrow (3)\Rightarrow (4)$  Trivial.

 $(4)\Rightarrow (1)$ Let $I$ be an ideal of $R$. Then $R/I$ is $u$-$S$-finitely presented with respective to $s$. So $I$ is $S$-finite with respective to $s$ by Theorem \ref{prop-usfp}(4), implying $R$ is $u$-$S$-Noetherian.
\end{proof}

Recall from \cite{zwz21,zwz21-p} that an $R$-module $P$ is called \emph{$u$-$S$-projective} (resp., \emph{ $u$-$S$-flat)} provided that the induced sequence $0\rightarrow \Hom_R(P,A)\rightarrow \Hom_R(P,B)\rightarrow \Hom_R(P,C)\rightarrow 0$ (resp., $0\rightarrow  P\otimes_RA\rightarrow  P\otimes_RB\rightarrow  P\otimes_RC\rightarrow  0$) is $u$-$S$-exact for any $u$-$S$-exact sequence $0\rightarrow A\rightarrow B\rightarrow C\rightarrow 0$. It was proved in \cite[Proposition 2.9]{zwz21-p} that any  $u$-$S$-projective module is $u$-$S$-flat.
\begin{proposition}\label{usfp+usf-usp}Let $R$ be a ring and $S$ a multiplicative subset of $R$. Then the following statements hold.
\begin{enumerate}
   \item Every $S$-finite  $u$-$S$-projective module is $u$-$S$-finitely presented.
   \item Every $u$-$S$-finitely presented $u$-$S$-flat module is  $u$-$S$-projective.
 \end{enumerate}
\end{proposition}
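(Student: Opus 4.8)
The plan is to handle the two parts separately, in each case reducing to a short free presentation and exploiting the defining lifting/tensor property of the relevant class.

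For (1), suppose $P$ is $S$-finite and $u$-$S$-projective. Since $P$ is $S$-finite there is a finitely generated free module $F=R^n$ and a $u$-$S$-epimorphism $\pi\colon F\to P$ (take a finite generating set of a finitely generated submodule $F_0\subseteq P$ with $sP\subseteq F_0$); writing $K=\Ker(\pi)$, the sequence $0\to K\to F\xrightarrow{\pi}P\to 0$ is $u$-$S$-exact. Because $P$ is $u$-$S$-projective, applying $\Hom_R(P,-)$ keeps it $u$-$S$-exact, so $\pi_*\colon\Hom_R(P,F)\to\Hom_R(P,P)$ is a $u$-$S$-epimorphism, giving $s'\in S$ and $g\colon P\to F$ with $\pi g=s'\,\mathrm{id}_P$. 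The endomorphism $s'\,\mathrm{id}_F-g\pi$ of $F$ then has image inside $K$ and satisfies $s'K\subseteq\Im(s'\,\mathrm{id}_F-g\pi)$; as this image is a quotient of $F$ it is finitely generated, whence $K$ is $S$-finite. Since $F$ is finitely presented, hence $u$-$S$-finitely presented, Theorem \ref{prop-usfp}(4) applied to $0\to K\to F\to P\to 0$ yields that $P$ is $u$-$S$-finitely presented.

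For (2) the strategy is to transport the classical fact ``finitely presented flat $\Rightarrow$ projective'' into the $u$-$S$-world. First I would reduce: as $M$ is $u$-$S$-finitely presented it is $u$-$S$-isomorphic to a finitely presented module $N$, and since the classes of $u$-$S$-flat and of $u$-$S$-projective modules are stable under $u$-$S$-isomorphism, it suffices to treat the case where $M=N$ is genuinely finitely presented and $u$-$S$-flat. Choose a genuine finite presentation $R^m\xrightarrow{\beta}F=R^n\xrightarrow{p}N\to 0$, put $K_0=\Im(\beta)=\Ker(p)$ (finitely generated), and write $k_l=\sum_j\beta_{jl}e_j$ for the images of the standard basis of $R^m$. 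The goal is to produce $s\in S$ and $g\colon N\to F$ with $pg=s\,\mathrm{id}_N$: once we have such a $u$-$S$-splitting of the free cover, lifting any map $N\to C$ along a $u$-$S$-epimorphism $B\to C$ through the free module $F$ and composing with $g$ shows that $\Hom_R(N,-)$ preserves $u$-$S$-exactness, i.e. $N$ is $u$-$S$-projective.

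To build $g$ I would seek corrections $k'_j\in K_0$ solving $\sum_j\beta_{jl}k'_j=s\,k_l$ for all $l$; granting these, $g(\overline{e_j}):=s\,e_j-k'_j$ respects the relations of $N$ and satisfies $pg=s\,\mathrm{id}_N$. Existence of the $k'_j$ is a $u$-$S$-solvability statement for the linear system with matrix $\beta^{\mathrm{t}}$ and right-hand side $(k_l)$: it is solvable over $F$ (take $X_j=e_j$), and I must force solvability over $K_0$ after scaling the right-hand side by some $s\in S$. Setting $T=\Coker(R^n\xrightarrow{\beta^{\mathrm{t}}}R^m)$ and tensoring $0\to K_0\to F\to N\to 0$ with $T$, the image of $(k_l)$ in $T\otimes K_0$ maps to $0$ in $T\otimes F$ and therefore lands in $\Im\big(\Tor_1^R(N,T)\to T\otimes K_0\big)$; since $N$ is $u$-$S$-flat, tensoring a free presentation of $T$ with $N$ shows $\Tor_1^R(N,T)$ is $u$-$S$-torsion, and any element $s$ annihilating it is exactly the scalar making the system solvable over $K_0$.

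The main obstacle is this last step: faithfully rendering the classical purity argument in the $u$-$S$-setting, namely isolating the splitting obstruction inside $\Tor_1^R(N,T)$ for the \emph{single} finitely presented module $T$ attached to the presentation, and checking that the uniform scalar $s$ provided by $u$-$S$-flatness simultaneously clears that obstruction and yields a well-defined $g$ with $pg=s\,\mathrm{id}_N$. By comparison, the surrounding reductions (passage to a genuinely finitely presented $N$ and the verification that a $u$-$S$-split free cover is $u$-$S$-projective) are routine diagram chases once stability of these classes under $u$-$S$-isomorphisms is available.
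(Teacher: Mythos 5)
Both parts of your proof are correct. For part (1) you take essentially the paper's route: the paper invokes \cite[Theorem 2.7]{zwz21-p} to conclude that the $u$-$S$-exact sequence $0\to \Ker(f)\to F\to P\to 0$ is $u$-$S$-split, deduces that $\Ker(f)$ is $S$-finite, and finishes with Theorem \ref{prop-usfp}(4), exactly as you do; your only variation is that you extract the section $g$ with $\pi g=s'\Id_P$ directly from the definition of $u$-$S$-projectivity and recover $S$-finiteness of $K$ from the endomorphism $s'\Id_F-g\pi$, which makes that step self-contained rather than cited.

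Part (2) is where you genuinely diverge. The paper's proof consists of two citations: $u$-$S$-flatness of $M$ makes the sequence $0\to\Ker(f)\to F\to M\to 0$ $u$-$S$-pure by \cite[Proposition 2.4]{zousap}, and a $u$-$S$-pure $u$-$S$-exact sequence with $u$-$S$-finitely presented cokernel is $u$-$S$-split by \cite[Theorem 2.2]{zousap}, whence $M$ is $u$-$S$-projective. You instead re-derive the splitting from scratch by transplanting the classical equational proof of ``finitely presented flat implies projective'': after reducing to a genuinely finitely presented $N$ via stability under $u$-$S$-isomorphisms, you locate the obstruction to splitting as a class in $\Tor_1^R(N,T)$ with $T=\Coker(\beta^{\mathrm{t}})$, and the uniform $s$ killing this single Tor module (available since $N$ is $u$-$S$-flat, by tensoring a free presentation of $T$ with $N$) yields the corrections $k'_j\in K_0$ and hence $g$ with $pg=s\Id_N$. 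The computation checks out: under $T\otimes_R K_0\cong K_0^m/\Im(\beta^{\mathrm{t}}\otimes 1)$, the class of $(k_l)_l$ dies in $T\otimes_R F$ precisely because $\sum_l\beta_{jl}\bar{f}_l=0$ in $T$, and vanishing of $s$ times this class is literally solvability of $\sum_j\beta_{jl}k'_j=sk_l$ over $K_0$; well-definedness of $g$ and the verification that a $u$-$S$-split free cover is $u$-$S$-projective are routine, as you say. In effect you re-prove, in exactly the case needed, the purity results of \cite{zousap} that the paper quotes: the paper's argument is shorter but opaque about where the uniform element comes from, while yours is self-contained and makes the uniformity visible. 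If you write it up, do cite (or prove) the two stability facts your reduction leans on --- that $u$-$S$-flatness and $u$-$S$-projectivity are preserved by $u$-$S$-isomorphisms --- since the paper's own proof never needs them.
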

\begin{proof} (1) Let $P$ be an $S$-finite  $u$-$S$-projective module, then there is a $u$-$S$-exact sequence $\Psi: 0\rightarrow \Ker(f)\xrightarrow{i} F\xrightarrow{f} P\rightarrow 0$ with $F$ finitely generated free. Since $P$ is $u$-$S$-projective, the sequence $\Psi$ is $u$-$S$-split by \cite[Theorem 2.7]{zwz21-p}. So there is a $u$-$S$-epimorphism $i':F\rightarrow \Ker(f)$ such that $i'\circ i=s\Id_{\Ker(f)}$ for some $s\in S$. Hence $\Ker(f)$ is $S$-finite, and so $P$ is $u$-$S$-finitely presented by Theorem \ref{prop-usfp}.

(2) Let $M$ be a  $u$-$S$-finitely presented $u$-$S$-flat module. Then there is a  $u$-$S$-exact sequence $\Upsilon: 0\rightarrow \Ker(f)\xrightarrow{i} F\xrightarrow{f} M\rightarrow 0$ with  $F$ finitely generated free and $\Ker(f)$ $S$-finite. Since $M$ is $u$-$S$-flat, $\Upsilon$ is $u$-$S$-pure by \cite[Proposition 2.4]{zousap}. It follows from \cite[Theorem 2.2]{zousap} that $\Upsilon$ is $u$-$S$-split. Thus $M$ is $u$-$S$-projective.
\end{proof}

\section{uniformly $S$-coherent modules and uniformly $S$-coherent rings}

Recall that an $R$-module is said to be a \emph{coherent module} if it is finitely generated and any finitely generated submodule is finitely presented. A ring $R$ is said to be a \emph{coherent ring} if $R$ is a coherent $R$-module. In this section, we will introduce a ``uniform'' version of coherent rings and coherent modules.

\begin{definition}\label{def-us-cohm}
Let $R$ be a ring and $S$ a multiplicative subset of $R$ . An  $R$-module $M$ is called a
\emph{$u$-$S$-coherent module} (abbreviates uniformly  $S$-coherent) (with respective to $s$) provided that there is $s\in S$ such that it is $S$-finite with respect to $s$ and  any finitely generated submodule of $M$ is $u$-$S$-finitely presented with respective to $s$.
\end{definition}

\begin{theorem}\label{prop-uscohm} Let $\Phi:0\rightarrow M\xrightarrow{f} N\xrightarrow{g} L\rightarrow 0$ be a $u$-$S$-exact sequence of $R$-modules. The following statements hold.
\begin{enumerate}
\item The class of $u$-$S$-coherent modules is closed under $u$-$S$-isomorphisms.
 \item If $L$ is $u$-$S$-coherent, then  $M$ is  $u$-$S$-coherent if and only if  $N$ is  $u$-$S$-coherent.
 \item Any finite direct sum of $u$-$S$-coherent modules  is $u$-$S$-coherent.
\item If $N$ is $u$-$S$-coherent and $M$ is $S$-finite, then $L$ is $u$-$S$-coherent.
\end{enumerate}
\end{theorem}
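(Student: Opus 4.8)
The plan is to prove each of the four statements by reducing, via part (1), to the case where $\Phi$ is an honest short exact sequence, and then to transport the defining data of $u$-$S$-coherence (an $S$-finite bound and $u$-$S$-finite presentation of finitely generated submodules, all uniform with respect to a single $s\in S$) across $f$ and $g$. The recurring technical device will be the pull-back/push-out constructions already used in Theorem \ref{prop-usfp}, together with the observation that a finitely generated submodule of one term of the sequence interacts with the other two terms through finitely generated submodules, so that $u$-$S$-finite presentation can be checked on those.

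For part (1), I would argue exactly as in Theorem \ref{prop-usfp}(1): $u$-$S$-coherence is defined through $S$-finiteness and $u$-$S$-finite presentation of finitely generated submodules, both of which are stable under $u$-$S$-isomorphism, so the whole property descends along a $u$-$S$-isomorphism. For part (4), assume $\Phi$ is exact with $N$ being $u$-$S$-coherent and $M$ being $S$-finite. The term $L$ is a quotient of $N$; given a finitely generated submodule $L'\subseteq L$, I would lift generators to $N$ and form the preimage, obtaining a finitely generated submodule $N'\subseteq N$ with $g(N')=L'$ and a short exact sequence $0\to M\cap N'\to N'\to L'\to 0$. Since $N'$ is $u$-$S$-finitely presented (as a finitely generated submodule of the $u$-$S$-coherent module $N$) and $M\cap N'$ is $S$-finite (being a submodule of the $S$-finite module $M$, which I can arrange by intersecting with a finitely generated bound), Theorem \ref{prop-usfp}(4) gives that $L'$ is $u$-$S$-finitely presented; $S$-finiteness of $L$ follows since $L$ is a quotient of the $S$-finite $N$.

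For part (2), assume $L$ is $u$-$S$-coherent and $\Phi$ is exact. If $N$ is $u$-$S$-coherent, then to show $M$ is $u$-$S$-coherent I take a finitely generated submodule $M'\subseteq M$; since $f$ is a monomorphism, $f(M')$ is a finitely generated submodule of $N$, hence $u$-$S$-finitely presented, and $M'\cong f(M')$, so $M'$ is $u$-$S$-finitely presented. Conversely, if $M$ is $u$-$S$-coherent, I would take a finitely generated $N'\subseteq N$, form the exact sequence $0\to M\cap N'\to N'\to g(N')\to 0$, note $g(N')$ is a finitely generated submodule of the $u$-$S$-coherent $L$ hence $u$-$S$-finitely presented, and show $M\cap N'$ is $S$-finite (here I use that $M$ is $u$-$S$-coherent, so $M$ is $S$-finite and I must control $M\cap N'$ as a finitely generated-up-to-$S$ submodule); then Theorem \ref{prop-usfp}(2) yields that $N'$ is $u$-$S$-finitely presented. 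Part (3) is then the special case of (2) applied to the split sequence $0\to M\to M\oplus L\to L\to 0$, using that the zero submodule situation makes $L$ and $M$ coherent force $M\oplus L$ coherent.

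The main obstacle I anticipate is \emph{uniformity of the element $s\in S$}. The definition of $u$-$S$-coherent module insists that a single $s$ simultaneously bounds $S$-finiteness of $M$ and witnesses $u$-$S$-finite presentation of \emph{every} finitely generated submodule; but a finitely generated submodule $N'\subseteq N$ need not be bounded by $f(M')$ plus a submodule of $L$ coming from a \emph{single} finite set, so when I pass $M\cap N'$ through $f$ and $g$ the witnessing elements could a priori depend on $N'$. The careful point is that the "Moreover" clause of Theorem \ref{prop-usfp} lets me keep the bounding element uniform across the maps in a fixed exact sequence, and the hypothesis that $N$ (resp.\ $L$) is $u$-$S$-coherent already provides one uniform $s$ for all its finitely generated submodules; I will need to combine these finitely many witnessing elements into a single product $s\in S$ that works for all finitely generated submodules of the target module at once, and verifying that this combination is genuinely independent of the chosen submodule is the delicate step.
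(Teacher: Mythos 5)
Your overall architecture (reduce to an exact sequence via (1), then transfer data along the sequence using Theorem \ref{prop-usfp}) matches the paper's, but there is a genuine gap at the step that carries all the weight in parts (2) and (4): you claim $M\cap N'$ is $S$-finite ``being a submodule of the $S$-finite module $M$''. That principle is false: $S$-finiteness does not pass to submodules --- the hereditary version of $S$-finiteness is precisely $S$-Noetherianity (already for $S=\{1\}$ a submodule of a finitely generated module need not be finitely generated, and $R$ itself is always $S$-finite while its ideals need not be). In part (4) this claim is essentially equivalent to what you are proving, so the argument begs the question. Your hedge about ``intersecting with a finitely generated bound'' does not repair it: if $sM\subseteq F$ with $F$ finitely generated, then $s(M\cap N')\subseteq F\cap N'$, but an intersection of two finitely generated submodules need not be finitely generated --- controlling such intersections is again exactly what coherence is about.

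The repair needs an extra idea, and this is where the paper's proof of (4) genuinely differs from yours: instead of the submodule $N'$ generated by lifts of generators of $K\subseteq L$, the paper takes the \emph{full} preimage $g^{-1}(K)$, which \emph{is} $S$-finite (an extension of the finitely generated $K$ by the $S$-finite $M$), covers it by a finitely generated free module, and uses $u$-$S$-coherence of $N$ on the image of that free module to get $S$-finiteness of the relevant kernel; a three-row diagram then yields a presentation of $K$. (Alternatively your construction can be saved by enlarging $N'$ to $N'+F$ where $sM\subseteq F\subseteq M$ with $F$ finitely generated: then $F\subseteq M\cap(N'+F)$ and $s\bigl(M\cap(N'+F)\bigr)\subseteq sM\subseteq F$, so this intersection really is $S$-finite with respect to the same $s$, while $g(N'+F)=K$ still holds.) In part (2), direction ``$M$, $L$ coherent $\Rightarrow$ $N$ coherent'', there is a second conflation: Theorem \ref{prop-usfp}(2) requires $M\cap N'$ to be $u$-$S$-\emph{finitely presented}, not merely $S$-finite. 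The missing steps are that $M\cap N'$ is $S$-finite because it is a $u$-$S$-epimorphic image of the kernel of a finite free cover of $N'$ composed with the surjection onto the $u$-$S$-finitely presented $g(N')$ (the correct source is $g(N')$, not the $S$-finiteness of $M$), and that an $S$-finite submodule of $M$ is $u$-$S$-isomorphic to a finitely generated submodule of $M$, hence $u$-$S$-finitely presented by coherence of $M$. Finally, in the direction ``$N$, $L$ coherent $\Rightarrow$ $M$ coherent'' you never verify that $M$ is $S$-finite (the paper extracts this from Theorem \ref{prop-usfp}(4)); by contrast, the uniformity bookkeeping you flag as the delicate point is unproblematic, since all witnesses involved ($S$-finiteness of $M$, coherence of $N$ or $L$, $u$-$S$-exactness of $\Phi$) are fixed in advance and one takes their product.
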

\begin{proof} (1) Let $h:A\rightarrow B$ be a $u$-$S$-isomorphism with $s_1\Ker(h)=s_1\Coker(h)=0$. Suppose $B$ is  $u$-$S$-coherent with respective to $s_2$, then one can check $A$ is  $u$-$S$-coherent with respective to $s_1s_2$. Similarly, if  $A$ is  $u$-$S$-coherent, then $B$ is also $u$-$S$-coherent (see \cite[Lemma 2.1]{zwz21-p}).

(2) By (1), we can assume that $\Phi$ is an exact sequence. Suppose $M$ and $L$ are  $u$-$S$-coherent with respective to $s$. Then  one can check $N$ is $u$-$S$-coherent with respective to $s$ from the proof of Theorem \ref{prop-usfp}(2). Suppose $N$ and $L$ are  $u$-$S$-coherent with respective to $s$. Then $M$ is $S$-finite with respective to some $s\in S$ by Theorem \ref{prop-usfp}(4). Since $N$ is $u$-$S$-coherent with respective to $s$,   $M$ is $u$-$S$-coherent with respective to $s$.

(3) Follows by (2).

(4) Assume that $\Phi$ is an exact sequence. Suppose $N$ is $u$-$S$-coherent with respective to $s$ and $M$ is $S$-finite  with respective to $s$ for some $s\in S$. Then $L$ is also $S$-finite  with respective to $s$. Let $K$ be a finitely generated submodule of $L$. Then the sequence   $0\rightarrow M \rightarrow g^{-1}(K) \rightarrow K\rightarrow 0$ is exact. So $g^{-1}(K)$ is  $S$-finite. Consider the following commutative diagram with rows and columns exact:
$$\xymatrix@R=17pt@C=25pt{ &0\ar[d] &0\ar[d]&0\ar[d]&\\
 0\ar[r]^{}& \Ker(m) \ar[d]^{}\ar[r]^{}&\Ker(n) \ar[d]^{}\ar[r]^{}& K_1\ar[d]^{}\ar[r]&0  \\
  0 \ar[r]^{}& R^n\ar[d]^{m}\ar[r]^{}& R^{n+s} \ar[r]^{}\ar[d]^{n}&R^{s}\ar[r] \ar[d]^{} &0\\
0\ar[r]^{}&M\ar[r]^{}&g^{-1}(K) \ar[r]&K \ar[r]\ar[d] &0\\
 &  & &0 &\\}$$
where $m$ and $n$ are $u$-$S$-epimorphisms. Since  $N$ is $u$-$S$-coherent, $\Ker(n)$ is $S$-finite, and so is $K_1$. Thus $L$ is $u$-$S$-coherent (with respective to $s$).
\end{proof}

\begin{corollary}\label{abel-uscohm} Let  $f:M\rightarrow N$ be an $R$-homomorphism of  $u$-$S$-coherent modules $M$ and $N$. Then $\Ker(f)$, $\Im(f)$ and $\Coker(f)$ are also $u$-$S$-coherent.
 \end{corollary}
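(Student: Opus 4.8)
The plan is to handle the three conclusions in the order $\Im(f)$, then $\Ker(f)$, then $\Coker(f)$, because once the image is known to be $u$-$S$-coherent the other two fall out of Theorem \ref{prop-uscohm}. First I would verify directly from Definition \ref{def-us-cohm} that $\Im(f)$ is $u$-$S$-coherent. Suppose $M$ is $u$-$S$-coherent with respect to $s_M$ and $N$ with respect to $s_N$, and put $s=s_Ms_N$. Since $M$ is $S$-finite there is a finitely generated $F\subseteq M$ with $s_MM\subseteq F$, and then $s\,\Im(f)\subseteq s_M\,\Im(f)=f(s_MM)\subseteq f(F)$, where $f(F)$ is finitely generated; hence $\Im(f)$ is $S$-finite with respect to $s$. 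Moreover every finitely generated submodule of $\Im(f)$ is a finitely generated submodule of $N$, hence $u$-$S$-finitely presented with respect to $s_N$, and a fortiori with respect to $s$. Thus $\Im(f)$ is $u$-$S$-coherent with respect to $s$.

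Next, for $\Ker(f)$ I would apply Theorem \ref{prop-uscohm}(2) to the short exact sequence $0\to\Ker(f)\to M\to\Im(f)\to 0$: the quotient $\Im(f)$ is $u$-$S$-coherent by the previous step and the middle term $M$ is $u$-$S$-coherent by hypothesis, so the submodule $\Ker(f)$ is $u$-$S$-coherent. Finally, for $\Coker(f)=N/\Im(f)$ I would apply Theorem \ref{prop-uscohm}(4) to $0\to\Im(f)\to N\to\Coker(f)\to 0$: the middle term $N$ is $u$-$S$-coherent and the submodule $\Im(f)$ is $S$-finite (being $u$-$S$-coherent, in particular $S$-finite), whence the quotient $\Coker(f)$ is $u$-$S$-coherent.

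The only step invoking the definition rather than Theorem \ref{prop-uscohm} is the treatment of $\Im(f)$, and the single point needing care there is the bookkeeping of the uniform parameter: $M$ and $N$ may be coherent with respect to \emph{different} elements $s_M,s_N\in S$, so one must pass to the common multiple $s=s_Ms_N$ and check that both the $S$-finiteness of $\Im(f)$ and the $u$-$S$-finite presentation of its finitely generated submodules persist with respect to this single $s$. This is the main (though minor) obstacle; the remaining two cases are immediate applications of the closure properties already established.
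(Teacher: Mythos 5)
Your proof is correct and takes essentially the same route as the paper, which likewise deduces the corollary from Theorem \ref{prop-uscohm} applied to the exact sequences $0\to\Ker(f)\to M\to\Im(f)\to 0$ and $0\to\Im(f)\to N\to\Coker(f)\to 0$. The only (harmless, and in fact clarifying) difference is that you establish the $u$-$S$-coherence of $\Im(f)$ directly from Definition \ref{def-us-cohm}, carefully passing to the common element $s=s_Ms_N$, whereas the paper leaves that step implicit in its one-line proof.
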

\begin{proof}  Using Theorem \ref{prop-uscohm} and the exact sequences $0\rightarrow \Ker(f)\rightarrow M\rightarrow \Im(f)\rightarrow 0$ and  $0\rightarrow \Im(f)\rightarrow N\rightarrow \Coker(f)\rightarrow 0$.
\end{proof}

\begin{corollary}\label{cap-add} Let $M$ and $N$ be $u$-$S$-coherent sub-modules of a $u$-$S$-coherent module. Then $M+N$ is  $u$-$S$-coherent if and only if so is $M\cap N$.
 \end{corollary}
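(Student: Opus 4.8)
The plan is to reduce the statement to the three-term results of Theorem \ref{prop-uscohm} applied to the canonical short exact sequence
$$0\rightarrow M\cap N\xrightarrow{\alpha} M\oplus N\xrightarrow{\beta} M+N\rightarrow 0,$$
where $\alpha(x)=(x,-x)$ and $\beta(m,n)=m+n$. Since $M$ and $N$ are submodules of a common $u$-$S$-coherent module, both $M+N$ and $M\cap N$ are genuine submodules of it, so the sequence makes sense, and a routine check gives exactness: $\beta$ is surjective onto $M+N$, while $\ker\beta=\{(x,-x)\mid x\in M\cap N\}=\im\alpha$. The first observation is that the middle term is harmless: since $M$ and $N$ are $u$-$S$-coherent, Theorem \ref{prop-uscohm}(3) gives that $M\oplus N$ is $u$-$S$-coherent.

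With $M\oplus N$ fixed as a $u$-$S$-coherent module, I would treat the two implications separately, each by invoking a different clause of Theorem \ref{prop-uscohm}. For the implication ``$M+N$ is $u$-$S$-coherent $\Rightarrow$ $M\cap N$ is $u$-$S$-coherent'', I would apply Theorem \ref{prop-uscohm}(2) to the displayed sequence: the quotient term $M+N$ is assumed $u$-$S$-coherent and the middle term $M\oplus N$ is $u$-$S$-coherent, so the equivalence in that clause forces the sub-term $M\cap N$ to be $u$-$S$-coherent.

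For the converse ``$M\cap N$ is $u$-$S$-coherent $\Rightarrow$ $M+N$ is $u$-$S$-coherent'', I would instead use Theorem \ref{prop-uscohm}(4): the middle term $M\oplus N$ is $u$-$S$-coherent, and the sub-term $M\cap N$ is $S$-finite because every $u$-$S$-coherent module is $S$-finite by Definition \ref{def-us-cohm}. Hence the quotient $M+N$ is $u$-$S$-coherent, as desired.

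Neither step involves a serious difficulty; the two things to get right are the exactness of the chosen sequence and the pairing of each implication with the correct clause of Theorem \ref{prop-uscohm}. The only point worth double-checking is that the $S$-finiteness hypothesis demanded by part (4) in the converse direction is automatic from the definition of a $u$-$S$-coherent module, so no separate argument for it is needed; the hypothesis that $M$ and $N$ lie inside a common $u$-$S$-coherent module is used only to guarantee that $M+N$ and $M\cap N$ are well-defined submodules to which the sequence and the theorem apply.
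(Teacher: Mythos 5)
Your proof is correct and follows exactly the paper's route: the paper likewise cites the short exact sequence $0\rightarrow M\cap N\rightarrow M\oplus N\rightarrow M+N\rightarrow 0$ together with Theorem \ref{prop-uscohm}, and your write-up merely makes explicit which clauses (2), (3), (4) handle each direction. The pairing of clause (2) with the forward implication and clause (4) (using that $u$-$S$-coherent modules are $S$-finite by Definition \ref{def-us-cohm}) with the converse is precisely the intended argument.
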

\begin{proof} Following by Theorem \ref{prop-uscohm} and the exact sequence $0\rightarrow M\cap N\rightarrow M\oplus N\rightarrow M+ N\rightarrow 0$.
\end{proof}

Let $\p$ be a prime ideal of $R$. We say an $R$-module $M$  is (simply)  \emph{$u$-$\p$-coherent} provided  $R$ is $u$-$(R\setminus\p)$-coherent.
\begin{proposition}\label{char-cohm} Let $R$ be a ring, $S$ a multiplicative subset of $R$ and $M$ an $R$-module. The following statements are equivalent.
\begin{enumerate}
    \item $M$ is a coherent $R$-module.
     \item $M$ is  $u$-$\p$-coherent for any $\p\in\Spec(R)$.
      \item $M$ is  $u$-$\m$-coherent for any $\m\in\Max(R)$.
\end{enumerate}
\end{proposition}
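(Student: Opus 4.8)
The plan is to follow the same local-to-global strategy used in Lemma \ref{char-fg} and Proposition \ref{char-fp}, reducing the coherence of $M$ to its two defining ingredients: finite generation of $M$, and finite presentation of every finitely generated submodule of $M$. The two cited local-global criteria already handle these ingredients one prime at a time, so the proof should be a direct assembly.

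The implications $(1)\Rightarrow(2)\Rightarrow(3)$ are immediate. If $M$ is coherent, then $M$ is finitely generated and every finitely generated submodule is finitely presented; choosing $s=1\in R\setminus\p$ exhibits $M$ as $u$-$\p$-coherent (with respect to $s=1$) for every $\p\in\Spec(R)$, giving $(1)\Rightarrow(2)$. The passage $(2)\Rightarrow(3)$ merely restricts the index set from $\Spec(R)$ to $\Max(R)\subseteq\Spec(R)$.

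The substance is $(3)\Rightarrow(1)$, which I would carry out in two steps. First, for the finite generation of $M$: the $u$-$\m$-coherence of $M$ entails in particular that $M$ is $\m$-finite (with respect to some $s^{\m}\in R\setminus\m$), and since this holds for every $\m\in\Max(R)$, Lemma \ref{char-fg} forces $M$ to be finitely generated. Second, let $N$ be an arbitrary finitely generated submodule of $M$. For each $\m\in\Max(R)$, the $u$-$\m$-coherence of $M$ says precisely that every finitely generated submodule of $M$ is $u$-$(R\setminus\m)$-finitely presented, so in particular $N$ is $u$-$\m$-finitely presented. Letting $\m$ range over $\Max(R)$, Proposition \ref{char-fp} then yields that $N$ is finitely presented. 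Combining the two steps shows that $M$ is finitely generated and that each of its finitely generated submodules is finitely presented, i.e.\ $M$ is a coherent $R$-module.

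I do not expect a genuine obstacle, as the argument simply glues together the already-proved local characterizations. The one bookkeeping point is that the uniform bound $s^{\m}$ is permitted to depend on $\m$; this causes no difficulty because both Lemma \ref{char-fg} and Proposition \ref{char-fp} are invoked prime by prime, with the unit-ideal (partition-of-unity) argument inside Lemma \ref{char-fg} performing the gluing from the family $\{s^{\m}\}_{\m\in\Max(R)}$.
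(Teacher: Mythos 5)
Your proposal is correct and follows essentially the same route as the paper: the trivial implications $(1)\Rightarrow(2)\Rightarrow(3)$, and for $(3)\Rightarrow(1)$ the combination of Lemma \ref{char-fg} (to get finite generation of $M$ from $\m$-finiteness at every maximal ideal) with Proposition \ref{char-fp} applied to each finitely generated submodule $N$ (to upgrade $u$-$\m$-finite presentation at every maximal ideal to finite presentation). Your explicit remark that the elements $s^{\m}$ may vary with $\m$ and that the gluing happens inside the unit-ideal argument of Lemma \ref{char-fg} is a helpful clarification, but it is the same argument the paper gives.
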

\begin{proof} $(1)\Rightarrow (2)\Rightarrow (3)$ Trivial.

$(3)\Rightarrow (1)$ By   Lemma \ref{char-fg}, $M$ is finitely generated. Let $N$ be a finitely generated of $M$, then $M$ is $u$-$\m$-finitely presented for any $\m\in\Max(R)$. So $M$ is finitely presented by Proposition \ref{char-fp}.
\end{proof}

\begin{definition}\label{def-us-cohr}
Let $R$ be a ring, $S$ a multiplicative subset of $R$ and $s\in S$. Then  $R$ is called a
\emph{$u$-$S$-coherent ring} (abbreviates uniformly  $S$-coherent) ring (with respective to $s$) provided that $R$ itself is a uniformly  $S$-coherent $R$-module with respective to $s$.
\end{definition}

Trivially, every coherent ring is $u$-$S$-coherent for any multiplicative set $S$. And if $S$ is composed of units, then $u$-$S$-coherent rings are exactly coherent rings.

The proof of the following result is easy and direct, so we omit it.
\begin{lemma}\label{abel-uscohm} Let $R=R_1\times R_2$ be direct  product of rings $R_1$ and $R_2$, $S=S_1\times S_2$ a multiplicative subset of $R$. Then $R$ is $u$-$S$-coherent if and only if $R_i$ is $u$-$S_i$-coherent for any $i=1,2$.
 \end{lemma}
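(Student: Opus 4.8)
The plan is to reduce the product statement to the component statements by exploiting the well-known module-theoretic decomposition over a product ring. Recall that over $R = R_1 \times R_2$, every $R$-module $M$ splits canonically as $M = M_1 \times M_2$ where $M_i = e_i M$ and $e_1 = (1,0)$, $e_2 = (0,1)$ are the central orthogonal idempotents; moreover an $R$-submodule of $M$ is exactly a product $N_1 \times N_2$ of $R_i$-submodules $N_i \subseteq M_i$. Since $S = S_1 \times S_2$, an element $s = (s_1, s_2) \in S$ kills a module $T = T_1 \times T_2$ if and only if $s_1 T_1 = 0$ and $s_2 T_2 = 0$ simultaneously. These two observations are the engine of the whole argument, and the main thing I would verify carefully is that finite generation, finite presentation, and $S$-finiteness all distribute across the product in the expected component-wise fashion.

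First I would establish the translation dictionary. An $R$-module $M = M_1 \times M_2$ is finitely generated (resp.\ finitely presented) as an $R$-module if and only if each $M_i$ is finitely generated (resp.\ finitely presented) as an $R_i$-module; this follows from the fact that $R^n = R_1^n \times R_2^n$ and that the projection functors $e_i(-)$ are exact and take free modules to free modules. Likewise, $M = M_1 \times M_2$ is $S$-finite with respect to $s = (s_1, s_2)$ if and only if $M_i$ is $S_i$-finite with respect to $s_i$ for $i = 1, 2$: a witnessing finitely generated submodule $F \subseteq M$ splits as $F_1 \times F_2$, and $sM \subseteq F \subseteq M$ becomes the pair of conditions $s_i M_i \subseteq F_i \subseteq M_i$. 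Finally, by Definition \ref{us-FP} together with the component-wise behaviour of $S$-finiteness and finite generation, $M$ is $u$-$S$-finitely presented with respect to $s$ if and only if each $M_i$ is $u$-$S_i$-finitely presented with respect to $s_i$; one simply splits the defining four-term exact sequence $0 \to T_1 \to F \to M \to T_2 \to 0$ into its two component sequences.

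With the dictionary in hand, the equivalence is essentially formal. For the forward direction, suppose $R$ is $u$-$S$-coherent with respect to some $s = (s_1, s_2) \in S$. By Definition \ref{def-us-cohr} applied to $R$ as a module over itself, $R$ is $S$-finite with respect to $s$ and every finitely generated submodule (that is, every finitely generated ideal) of $R$ is $u$-$S$-finitely presented with respect to $s$. A finitely generated ideal of $R_i$, viewed inside $R$ via the idempotent, gives a finitely generated ideal of $R$, and the dictionary then forces the corresponding $u$-$S_i$-finite-presentation of that ideal over $R_i$ with respect to $s_i$. Together with the trivial $S_i$-finiteness of $R_i$, this shows $R_i$ is $u$-$S_i$-coherent with respect to $s_i$. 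Conversely, if each $R_i$ is $u$-$S_i$-coherent with respect to $s_i$, set $s = (s_1, s_2)$; any finitely generated ideal $I$ of $R$ decomposes as $I_1 \times I_2$ with each $I_i$ finitely generated over $R_i$, hence $u$-$S_i$-finitely presented with respect to $s_i$, and the dictionary reassembles these into a $u$-$S$-finite presentation of $I$ with respect to $s$, giving $u$-$S$-coherence of $R$.

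The only genuine subtlety — and the step I would treat most carefully — is the uniform choice of the witnessing element. In the forward direction a single $s = (s_1, s_2)$ works uniformly for all ideals of $R$, and its components $s_1, s_2$ inherit that uniformity for the ideals of $R_1$ and $R_2$ respectively; in the reverse direction the components $s_1, s_2$ each work uniformly over their factor, so their pair works uniformly over $R$. Because coherence is about \emph{all} finitely generated submodules at once, one must confirm that the bound does not degrade when passing between the product and its factors, but this is immediate once one observes that a finitely generated ideal of $R_i$ is literally (the $i$-th component of) a finitely generated ideal of $R$ and vice versa. Since the author explicitly declares this proof ``easy and direct,'' I expect no deeper obstacle beyond bookkeeping the idempotent decomposition and tracking the single element $s$ through the two directions.
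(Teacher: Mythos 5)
Your proof is correct: the idempotent decomposition $M = e_1M \times e_2M$, the componentwise dictionary for finite generation, finite presentation, $S$-finiteness and $u$-$S$-finite presentation, and the careful tracking of the uniform witness $s=(s_1,s_2)$ constitute exactly the ``easy and direct'' argument that the paper declares and omits. There is nothing to compare beyond that, since the paper gives no proof of its own; your write-up simply supplies the intended one.
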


The following example shows that not every $u$-$S$-coherent rings is coherent.

\begin{example}\label{not-coh} Let $R_1$ be a coherent ring and $R_2$ a non-coherent ring, $S_1=\{1\}$ and $S_2=\{0\}$. Set $R=R_1\times R_2$ and $S=S_1\times S_2$. Then $R$ is $u$-$S$-coherent non-coherent ring.
 \end{example}

Let $\p$ be a prime ideal of $R$. We say a ring $R$ is (simply)  \emph{$u$-$\p$-coherent} provided  $R$ is $u$-$(R\setminus\p)$-coherent.
\begin{proposition}\label{char-cohr} Let $R$ be a ring and $S$ a multiplicative subset of $R$ . The following statements are equivalent.
\begin{enumerate}
    \item $R$ is a coherent ring.
     \item $R$ is a $u$-$\p$-coherent ring for any $\p\in\Spec(R)$.
      \item $R$ is a $u$-$\m$-coherent ring for any $\m\in\Max(R)$.
\end{enumerate}
\end{proposition}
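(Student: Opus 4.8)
The plan is to recognize this proposition as nothing more than the ring-theoretic specialization of the already-established module-theoretic Proposition \ref{char-cohm}. By definition $R$ is a coherent ring exactly when $R$ is a coherent $R$-module, and by Definition \ref{def-us-cohr} (together with the convention defining $u$-$\p$-coherence of a ring) $R$ is a $u$-$\p$-coherent ring exactly when the $R$-module $R$ is $u$-$\p$-coherent. So I would simply read off conditions $(1)$, $(2)$, $(3)$ of Proposition \ref{char-cohm} for the module $M = R$ and conclude the equivalence at once.

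For a self-contained argument I would dispatch the easy implications first. The implication $(1) \Rightarrow (2)$ follows from the remark after Definition \ref{def-us-cohr} that a coherent ring is $u$-$S$-coherent for every multiplicative subset $S$, applied with $S = R\setminus\p$; and $(2) \Rightarrow (3)$ is immediate because $\Max(R) \subseteq \Spec(R)$.

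The only implication carrying content is $(3) \Rightarrow (1)$, and here I would argue as follows. As an $R$-module, $R$ is trivially finitely generated, so it suffices to show every finitely generated ideal $I$ is finitely presented. Fixing a maximal ideal $\m$, the hypothesis that $R$ is $u$-$\m$-coherent together with Definition \ref{def-us-cohm} tells me that the finitely generated submodule $I \subseteq R$ is $u$-$\m$-finitely presented. Since $\m$ was arbitrary, Proposition \ref{char-fp} then upgrades this to $I$ being finitely presented, which is exactly what coherence of the $R$-module $R$ requires.

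I do not expect a genuine obstacle here: the entire proof is definitional bookkeeping that reduces everything to the module case treated in Proposition \ref{char-cohm}. The one step needing a little care is unwinding the definition of $u$-$\m$-coherence to extract that each finitely generated ideal of $R$ is $u$-$\m$-finitely presented, after which Proposition \ref{char-fp} carries the whole load.
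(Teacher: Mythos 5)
Your proposal is correct and takes essentially the same approach as the paper, whose entire proof is the single line ``Follows by Proposition \ref{char-cohm}'' --- i.e., exactly your observation that the ring statement is the module statement specialized to $M=R$. Your self-contained unwinding of $(3)\Rightarrow(1)$ via Proposition \ref{char-fp} is just the paper's proof of Proposition \ref{char-cohm} restated for $M=R$, so nothing further is needed.
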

\begin{proof} Follows by  Proposition \ref{char-cohm}.
\end{proof}

\begin{proposition}\label{abel-uscohm} Let $R$ be a ring, $S$ a multiplicative subset of $R$. If $R$ is a $u$-$S$-Noetherian ring, then $R$ is $u$-$S$-coherent.
 \end{proposition}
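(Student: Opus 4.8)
The plan is to reduce the statement directly to the uniform characterization of $u$-$S$-Noetherian rings already established in Theorem \ref{usnoe}, rather than arguing with ideals from scratch. Recall from Definition \ref{def-us-cohr} that to show $R$ is $u$-$S$-coherent it suffices to exhibit a single element $s\in S$ for which $R$ is $S$-finite with respect to $s$ and every finitely generated ideal of $R$ is $u$-$S$-finitely presented with respect to that same $s$. The crux is therefore to produce one uniform witness $s$ that handles all finitely generated submodules of $R$ simultaneously.

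First I would invoke Theorem \ref{usnoe}, specifically the implication $(1)\Rightarrow(4)$: since $R$ is $u$-$S$-Noetherian, there exists $s\in S$ such that \emph{every} finitely generated $R$-module is $u$-$S$-finitely presented with respect to this single $s$. In particular, every finitely generated ideal of $R$, being a finitely generated $R$-module, is $u$-$S$-finitely presented with respect to $s$. This is precisely the finite-presentation half of Definition \ref{def-us-cohm}, now supplied with a uniform $s$ at no extra cost.

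It then remains only to observe that $R$ itself is $S$-finite (indeed finitely generated, being generated by $1$), so the $S$-finiteness clause of Definition \ref{def-us-cohm} holds for the very same $s$ trivially. Combining the two facts, $R$ is a $u$-$S$-coherent $R$-module with respect to $s$, hence a $u$-$S$-coherent ring. I do not expect a genuine obstacle here: the entire content lies in having isolated the uniform witness in Theorem \ref{usnoe}(4), which is exactly what lets the ``uniform'' coherence condition be met by a single $s$. The only point requiring a moment's care is that the element $s$ certifying uniform finite presentation of all finitely generated modules is the \emph{same} element witnessing $S$-finiteness of $R$; this is immediate, since $R$ is finitely generated and thus $S$-finite with respect to any choice of $s$.
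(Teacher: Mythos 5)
Your proof is correct and takes exactly the paper's route: the paper's own proof is the one-line ``Follows from Theorem \ref{usnoe},'' and your argument---invoking the implication $(1)\Rightarrow(4)$ of that theorem to get a single $s$ working for all finitely generated ideals, and noting that $R$ is $S$-finite with respect to any $s$ since it is generated by $1$---is precisely the intended unpacking of that citation.
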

\begin{proof} Follows from Theorem \ref{usnoe}.
\end{proof}

Trivially, $u$-$S$-coherent rings are not $u$-$S$-Noetherian in general. Indeed, we can find a non-Noetherian coherent ring in the case that $S=\{1\}$.

In 1960, Chase characterized coherent rings by considering annihilator of elements and  intersection of  finitely generated  ideals in \cite[Theorem 2.2]{C60}. Now, we give  a ``uniform'' version of Chase's result.

\begin{proposition}\label{nonn-coh}\textbf{$($Chase's result for $u$-$S$-coherent rings$)$}
Let $R$ be a ring and $S$ a multiplicative subset of $R$.  Then   the following assertions are equivalent:
\begin{enumerate}
    \item  $R$ is a $u$-$S$-coherent ring;
      \item there is $s\in S$ such that $(0:_Rr)$ is $S$-finite with respective to $s$ for any  $r\in R$, and the intersection of two finitely generated  ideals of $R$ is $S$-finite with respective to $s$;
      \item  there is $s\in S$ such that $(I:_Rb)$ is $S$-finite with respective to $s$ for any  element $b\in R$ and any   finitely generated  ideal $I$ of $R$.
 \end{enumerate}
\end{proposition}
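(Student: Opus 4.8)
The strategy is to prove the cycle of implications $(1)\Rightarrow(2)\Rightarrow(3)\Rightarrow(1)$, mirroring the classical Chase argument but keeping careful track of a \emph{single} element $s\in S$ that works uniformly across all the relevant modules. Throughout, I would exploit Theorem \ref{prop-usfp}(4) and the homological characterizations in Theorem \ref{prop-uscohm}, which let me pass between ``$S$-finite kernel'' and ``$u$-$S$-finitely presented'' statements with control over the witnessing element.

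\medskip

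\noindent\textbf{$(1)\Rightarrow(2)$.} Assume $R$ is $u$-$S$-coherent with respect to some $s\in S$. For the annihilator: given $r\in R$, the map $R\xrightarrow{\cdot r}R$ has image the principal (finitely generated) ideal $rR$, which is then $u$-$S$-finitely presented with respect to $s$ by the coherence hypothesis. From the exact sequence $0\to(0:_Rr)\to R\xrightarrow{\cdot r} rR\to 0$ and Theorem \ref{prop-usfp}(4), applied with the ambient module $R$ (which is $S$-finite with respect to $s$), I would read off that $(0:_Rr)$ is $S$-finite with respect to $s$. For the intersection of two finitely generated ideals $I,J$: here I would use Corollary \ref{cap-add}, noting that $I$, $J$, and $I+J$ are all finitely generated submodules of the $u$-$S$-coherent module $R$, hence $u$-$S$-coherent (via Theorem \ref{prop-uscohm}), so $I\cap J$ is $u$-$S$-coherent and in particular $S$-finite. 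The delicate point is to verify that the \emph{same} $s$ governs all these conclusions; this follows because the coherence witness $s$ applies uniformly to every finitely generated submodule, and the constructions above only compose a bounded number of the standard $u$-$S$-operations.

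\medskip

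\noindent\textbf{$(2)\Rightarrow(3)$.} This is the computational heart and the step I expect to be the main obstacle. Given a finitely generated ideal $I=\langle a_1,\dots,a_k\rangle$ and $b\in R$, I want to control $(I:_Rb)$. I would proceed by induction on $k$, reducing the colon ideal of a $k$-generated ideal to colon ideals of smaller ideals together with annihilators and intersections. Concretely, the classical identity relates $(I:_Rb)$ to data of the form $\big(\langle a_1,\dots,a_{k-1}\rangle:_Rb\big)$, an annihilator $(0:_R c)$ for a suitable $c$, and an intersection of two finitely generated ideals; each of these is $S$-finite with respect to $s$ by hypothesis (2). The obstacle is bookkeeping: each inductive step may a priori require multiplying the witnessing elements, which would make $s$ depend on $k$ and hence on $b$, destroying uniformity. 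The fix is that $k$ is the number of generators of a \emph{fixed} finitely generated ideal, so one may first reduce, using the uniformity in (2), to a uniform bound, or argue that the colon construction can be realized as a kernel of a single map $R\to (R/I)^{\oplus\,?}$ or $R\to R^k$ whose relevant submodules are all $S$-finite with respect to the one $s$ from (2). I would set up $(I:_Rb)$ as $\Ker$ of the composite $R\xrightarrow{\cdot b} R\to R/I$ and analyze $S$-finiteness of this kernel directly using that $R/I$ has an $S$-finite submodule structure inherited from the intersection/annihilator hypotheses, thereby avoiding an accumulating product of elements.

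\medskip

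\noindent\textbf{$(3)\Rightarrow(1)$.} Finally, to recover $u$-$S$-coherence I must show every finitely generated ideal is $u$-$S$-finitely presented with respect to a single $s$. I would argue by induction on the number of generators that a finitely generated ideal $I$ has an $S$-finite module of relations, i.e.\ that the kernel of $R^n\twoheadrightarrow I$ is $S$-finite with respect to $s$; by Theorem \ref{prop-usfp}(2)/(4) this yields $u$-$S$-finite presentation. The inductive step expresses the syzygy module of $\langle a_1,\dots,a_n\rangle$ in terms of the syzygies of $\langle a_1,\dots,a_{n-1}\rangle$ together with a colon ideal $\big(\langle a_1,\dots,a_{n-1}\rangle:_R a_n\big)$, which is $S$-finite with respect to $s$ precisely by hypothesis (3). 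As before, the uniformity of $s$ is what requires care, and I would phrase the induction so that the witness does not grow with $n$. Since $R$ itself is finitely generated (hence $S$-finite) and every finitely generated ideal is thereby $u$-$S$-finitely presented with respect to $s$, $R$ is $u$-$S$-coherent with respect to $s$, completing the cycle.
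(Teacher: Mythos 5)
Your plan hinges on the cycle $(1)\Rightarrow(2)\Rightarrow(3)\Rightarrow(1)$, and the genuine gap is precisely the middle arrow $(2)\Rightarrow(3)$: what you write there is not a proof. The ``classical identity'' relating $(I:_Rb)$ to smaller colon ideals, annihilators and intersections is never stated, and you yourself identify the fatal problem --- the witness element multiplying at each inductive step, hence depending on the number of generators $k$ --- without resolving it; ``reduce to a uniform bound'' and ``realize the colon as a kernel of a single map'' are gestures, not arguments. Since the whole scheme is a cycle, this single gap breaks the equivalence: with only $(1)\Rightarrow(2)$ and $(3)\Rightarrow(1)$ in hand, you never get from $(2)$ back to $(1)$. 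Ironically, $(2)\Rightarrow(3)$ needs no induction at all: multiplication by $b$ gives an exact sequence $0\rightarrow (0:_Rb)\rightarrow (I:_Rb)\xrightarrow{\cdot b} I\cap Rb\rightarrow 0$, and since $Rb$ and $I$ are finitely generated, hypothesis $(2)$ makes both ends $S$-finite with respect to $s$; an extension of modules that are $S$-finite with respect to $s$ is $S$-finite with respect to $s^2$, a \emph{fixed} power independent of $I$ and $b$. That one observation would close your cycle.

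For comparison, the paper sidesteps $(2)\Rightarrow(3)$ entirely by proving the two equivalences separately: $(2)\Rightarrow(1)$ by induction on the number of generators using the exact sequence $0\rightarrow \langle a_1,\dots,a_k\rangle\cap \langle a_{k+1}\rangle\rightarrow \langle a_1,\dots,a_k\rangle\oplus \langle a_{k+1}\rangle\rightarrow \langle a_1,\dots,a_{k+1}\rangle\rightarrow 0$, and $(1)\Rightarrow(3)$ via the isomorphism $(Rb+I)/I\cong R/(I:_Rb)$ combined with Theorem \ref{prop-usfp}. Your $(1)\Rightarrow(2)$ and $(3)\Rightarrow(1)$ do essentially match the paper's (the latter is the paper's induction on generators, phrased with syzygy modules instead of the quotients $I/L$). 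A final caution that applies to both of your inductive steps: promising to ``phrase the induction so that the witness does not grow with $n$'' does not make it so; the mechanism the paper relies on is the ``Moreover'' clause of Theorem \ref{prop-usfp}, which asserts that for \emph{exact} sequences the conditions in parts $(2)$ and $(4)$ can be taken uniform with respect to the same $s\in S$. Any correct write-up must invoke that clause (or prove an equivalent witness-tracking statement) explicitly at each inductive step.
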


\begin{proof}  $(1)\Rightarrow (2)$: Suppose  $R$ is $u$-$S$-coherent with respective to $s$. Considering  the exact sequence $0\rightarrow (0:_Rr)\rightarrow R\rightarrow Rr\rightarrow 0$, we have $(0:_Rr)$ is $S$-finite with respective to $s$ by Theorem \ref{prop-usfp}. For any  two finitely generated  ideals $I,J$ of $R$, we have $I\cap J$ is  $S$-finite with respective to $s$ by Corollary \ref{cap-add} and Theorem \ref{prop-usfp}.

$(2)\Rightarrow (1)$: Let $I=\langle a_1,\cdots,a_n\rangle$ be a finitely generated ideal of $R$. We claim that  $I$ is $u$-$S$-finitely presented with respective to $s$ by induction on $n$. Suppose $n=1$. the claim follows by  the exact sequence $0\rightarrow (0:_Rr)\rightarrow R\rightarrow Rr\rightarrow 0$. Suppose $n=k$, the claim holds. Suppose $n=k+1$. the claim holds by the exact sequence $0\rightarrow \langle a_1,\cdots,a_k\rangle\cap \langle a_{k+1}\rangle\rightarrow \langle a_1,\cdots,a_k\rangle\oplus \langle a_{k+1}\rangle \rightarrow \langle a_1,\cdots,a_{k+1}\rangle\rightarrow 0$. So the claim holds for all $n$.

$(1)\Rightarrow (3)$: Suppose  $R$ is $u$-$S$-coherent with respective to $s$. Let $I$ be a finitely generated  ideal of $R$ and $b$ an element in $R$. Consider the following commutative diagram with  exact rows: $$\xymatrix@R=20pt@C=15pt{
 0 \ar[r]^{}  & I\ar[d]_{}\ar[r]^{} &Rb+I\ar[r]^{}\ar[d]_{} & (Rb+I)/I \ar[d]^{\cong}\ar[r]^{} &  0\\
   0 \ar[r]^{} &(I:_Rb) \ar[r]^{} &R \ar[r]^{} & R/(I:_Rb)\ar[r]^{} &  0.}$$
 Since $R$ is $u$-$S$-coherent with respective to $s$, we have $Rb+I$ is $u$-$S$-finitely presented  with respective to $s$. Since $I$ is finitely generated, $(Rb+I)/I$ is $u$-$S$-finitely presented  with respective to $s$  by  Theorem \ref{prop-usfp}. Thus $(I:_R b)$ is $S$-finite is  with respective to $s$  by  Theorem \ref{prop-usfp} again.

$(3)\Rightarrow (1)$: Let  $I$ be a finitely generated ideal of $R$ generated by $\{a_1,...,a_{n}\}$ . We will show $I$ is $u$-$S$-finitely presented by induction on $n$. The case $n=1$ follows from the exact sequence $0\rightarrow (0:_Ra_1)\rightarrow R\rightarrow Ra_1\rightarrow 0$. For $n\geq 2$, let $L=\langle a_1,...,a_{n-1}\rangle$. Consider the exact sequence $0 \ra (L:_R a_n)\ra R\ra (Ra_n+L)/L\ra 0$. Then $(Ra_n+L)/L=I/L$ is $u$-$S$-finitely presented  with respective to $s$  by (3) and Theorem \ref{prop-usfp}. Consider the exact sequence $0 \rightarrow L \rightarrow I \rightarrow I/L\rightarrow 0$. Since $L$ is finitely  presented by induction and $I/L$ is $u$-$S$-finitely presented  with respective to $s$ , $I$ is also $u$-$S$-finitely presented  with respective to $s$ by Theorem \ref{prop-usfp}.
\end{proof}

Recall from \cite{bh18} that a ring $R$ is \emph{$S$-coherent} (resp., \emph{$c$-$S$-coherent}) provided that any finitely generated ideal is $S$-finitely presented (resp., $c$-$S$-finitely presented).
\begin{proposition}\label{abel-uscohm} Let $R$ be a ring, $S$ a multiplicative subset of $R$. If $R$ is a $u$-$S$-coherent ring, then $R$ is both $S$-coherent and $c$-$S$-coherent.
 \end{proposition}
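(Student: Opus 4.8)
The plan is to reduce both assertions to a single statement about an arbitrary finitely generated ideal $I$ of $R$. By Definition \ref{def-us-cohr}, $R$ being $u$-$S$-coherent (with respect to a fixed $s\in S$) means exactly that every finitely generated submodule of $R$, i.e.\ every finitely generated ideal $I$, is $u$-$S$-finitely presented with respect to $s$. So it suffices to show that a finitely generated ideal which is $u$-$S$-finitely presented is both $S$-finitely presented and $c$-$S$-finitely presented. The crucial extra input, compared with the general module case, is that $I$ is genuinely finitely generated, so there is an \emph{honest} surjection $\pi\colon R^n\twoheadrightarrow I$ rather than merely a $u$-$S$-epimorphism.

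For the $S$-coherent part I would proceed directly. Choosing generators $a_1,\dots,a_n$ of $I$ gives an exact sequence $0\to K\to R^n\xrightarrow{\pi} I\to 0$ with $K=\Ker(\pi)$. Here $R^n$ is finitely presented, hence $u$-$S$-finitely presented, and $I$ is $u$-$S$-finitely presented by hypothesis; applying Theorem \ref{prop-usfp}(4) to this sequence yields that $K$ is $S$-finite. An exact sequence $0\to K\to R^n\to I\to 0$ with $R^n$ finitely generated free and $K$ $S$-finite is precisely an $S$-finite presentation of $I$, so $I$ is $S$-finitely presented and $R$ is $S$-coherent.

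For the $c$-$S$-coherent part I would start from a $u$-$S$-finitely presented resolution $0\to T_1\to F\xrightarrow{f} I\to T_2\to 0$ with $F$ finitely presented and $sT_1=sT_2=0$, and take the candidate submodule $N:=\Im(f)\subseteq I$. Since $sT_2=0$ we have $sI\subseteq N\subseteq I$, which is exactly the cofinality demanded by the definition of $c$-$S$-finitely presented; moreover $N\cong F/T_1$ is a finitely generated ideal, hence $u$-$S$-finitely presented because $R$ is $u$-$S$-coherent. The whole argument therefore hinges on upgrading $N$ from $u$-$S$-finitely presented to genuinely \emph{finitely presented}, equivalently on proving that $T_1=\Ker(f)$ is finitely generated. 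I expect this to be the main obstacle: applying Theorem \ref{prop-usfp}(4) to $0\to T_1\to F\to N\to 0$ only shows that $T_1$ is $S$-finite, and since $sT_1=0$ this is vacuous, so finite generation of $T_1$ does not come for free. The natural route is to control the $s$-torsion directly, reducing to the claim that $sI$ is finitely presented, i.e.\ that the relation module $(K:_{R^n}s)$ (which surjects onto $(0:_I s)$ with kernel $K$) is finitely generated; for this one would try to bound $(0:_I s)$ using the uniform annihilator condition in Chase's result (Proposition \ref{nonn-coh}). This torsion-control step is where the uniformity of $S$-coherence must be used in an essential way, and it is the delicate heart of the argument, so I would devote the most care to verifying that the relevant $s$-torsion is finitely generated rather than merely $S$-torsion.
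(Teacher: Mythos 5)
Your treatment of the $S$-coherent half is correct and coincides with the paper's own proof: from an exact sequence $0\to K\to R^n\xrightarrow{\pi} I\to 0$ with $R^n$ finitely generated free and $I$ $u$-$S$-finitely presented, Theorem \ref{prop-usfp}(4) gives that $K$ is $S$-finite, and this is precisely the definition of $I$ being $S$-finitely presented.

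The $c$-$S$-coherent half of your proposal, by contrast, is not a proof. After the (correct) reduction to producing a finitely presented submodule of $I$ containing $s'I$ for some $s'\in S$, you explicitly leave the key step --- finite generation of $T_1$, equivalently finite presentation of $sI$, equivalently finite generation of $(K:_{R^n}s)$ --- as something still ``to be verified'', and your own observation that Theorem \ref{prop-usfp}(4) and Proposition \ref{nonn-coh} only yield $S$-finiteness statements, which are vacuous for modules killed by $s$, shows that no tool in the paper closes this step. That is a genuine gap. The paper proceeds differently at this point: from $0\to T_1\to N\xrightarrow{f}I\to T_2\to 0$ it notes $sI\subseteq\Im(f)\cong N/T_1$, asserts that since $sT_1=0$ the module $s^2I$ ``can be seen as a submodule of $N$'', and concludes directly that $I$ is $c$-$S$-finitely presented. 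But that final inference is also unjustified: the definition demands a finitely presented submodule \emph{of} $I$, and a submodule of the finitely presented module $N$ need not be finitely presented; so the paper's argument does not fill the hole you identified either.

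In fact the obstruction you isolated is fatal: the $c$-$S$-coherent assertion is false in general, so no strategy can complete your outline. Take $R=k[s,a,u_1,u_2,\dots]/(au_i,\,su_i,\,u_iu_j)$ and $S=\{1,s,s^2,\dots\}$, so that $R=k[s,a]\oplus U$ with $U=\bigoplus_i ku_i$ and $sU=aU=U^2=0$. For any finitely generated ideal $I$ of $R$, the kernel of a suitably chosen presentation $R^r\to I$ splits as $V_0\oplus W_0$, where $V_0\subseteq k[s,a]^r$ is a finitely generated $k[s,a]$-module of polynomial syzygies and $W_0\subseteq U^r$; since $sW_0=0$ and the $R$-span of a finite $k[s,a]$-generating set of $V_0$ is a finitely generated submodule of this kernel containing $sV_0$, the kernel is $S$-finite with respect to $s$, hence $R$ is $u$-$S$-coherent with respect to $s$ in the sense of Definition \ref{def-us-cohr}. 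However, the ideal $Ra$ contains no nonzero finitely presented $R$-submodule at all: a nonzero submodule $J$ corresponds to a nonzero ideal $\mathfrak{j}=(q_1,\dots,q_r)$ of $k[s,a]$, the kernel of $R^r\to J$, $e_j\mapsto q_ja$, equals $V\oplus U^r$ with $V={\rm Syz}_{k[s,a]}(q_1,\dots,q_r)$, and $V\oplus U^r$ can be finitely generated over $R$ only if the constant-term vectors of elements of $V$ span $k^r$; but those vectors span $\Ker\bigl(k^r\to\mathfrak{j}/\mathfrak{m}\mathfrak{j}\bigr)$ with $\mathfrak{m}=(s,a)$, a proper subspace since $\mathfrak{j}/\mathfrak{m}\mathfrak{j}\neq 0$ by Nakayama. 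Hence $Ra$ is not $c$-$S$-finitely presented and this $u$-$S$-coherent ring is not $c$-$S$-coherent. So your proposal proves exactly the part of the statement that is provable; the step you flagged as the ``delicate heart'' is not a missing verification but the locus of a counterexample.
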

\begin{proof}  Let $I$ be a finitely generated ideal and $0\rightarrow K\rightarrow F\rightarrow I\rightarrow 0$ an exact sequence with $F$ finitely generated free. Then $K$ is $S$-finite by Theorem \ref{prop-usfp}(4). Thus $I$ is $S$-finitely presented, and so $R$ is $S$-coherent. Consider the exact sequence $0\rightarrow T_1\rightarrow N\xrightarrow{f} I\rightarrow T_2\rightarrow 0$ with $N$ finitely presented and $sT_1=sT_2=0$. Note that sin $sT_2=0$, we have $sI\subseteq \Im(f)\cong N/T_1$. Since $sT_1=0$, $s^2I$ can be seen as a submodule of $N$. Hence $I$ is $c$-$S$-finitely presented. Consequently, $R$  is $c$-$S$-coherent.
\end{proof}

\begin{proposition} \label{s-loc-u-noe-fini}
Let $R$ be a ring and $S$ a multiplicative subset of $R$ consisting of finite  elements. Then the following statements are equivalent.
\begin{enumerate}
\item  $R$ is a $u$-$S$-coherent ring.
 \item $R$ is an $S$-coherent ring.
 \item $R$ is a  $c$-$S$-coherent ring.
 \end{enumerate}
\end{proposition}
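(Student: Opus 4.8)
The plan is as follows. The implications $(1)\Rightarrow(2)$ and $(1)\Rightarrow(3)$ require no finiteness hypothesis on $S$ and have already been recorded: every $u$-$S$-coherent ring is simultaneously $S$-coherent and $c$-$S$-coherent. Hence the entire content of the proposition lies in the two converse implications $(2)\Rightarrow(1)$ and $(3)\Rightarrow(1)$, and this is precisely where the assumption that $S$ consists of finitely many elements enters. The device I would use is a single ``universal'' uniformizer: writing $S=\{s_1,\dots,s_n\}$, put $s=s_1\cdots s_n$. Since $S$ is multiplicatively closed, $s\in S$, and for every $t\in S$ we may factor $s=ts'$ with $s'\in S$ (take $s'$ to be the product of the remaining elements). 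Consequently any relation of the form $tX=0$ with $t\in S$ at once yields $sX=s'(tX)=0$, so a condition that holds with respect to \emph{some} element of $S$ automatically holds with respect to this one fixed $s$.

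First I would isolate the uniformization step as a short observation. If an $R$-module $M$ is $S$-finitely presented, then by the description recalled in Section 2 there is an exact sequence $0\rightarrow T_1\rightarrow N\rightarrow M\rightarrow 0$ with $N$ finitely presented and $tT_1=0$ for some $t\in S$; the factorization $s=ts'$ gives $sT_1=0$, so (taking $T_2=0$) this sequence displays $M$ as $u$-$S$-finitely presented with respect to $s$. Dually, if $M$ is $c$-$S$-finitely presented there is an exact sequence $0\rightarrow N\rightarrow M\rightarrow T_2\rightarrow 0$ with $N$ finitely presented and $tT_2=0$, whence $sT_2=0$ and $M$ is $u$-$S$-finitely presented with respect to $s$ (now with $T_1=0$). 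The essential point is that the same element $s$ serves every module at once, irrespective of which $t\in S$ each one originally used.

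With this observation the converses are immediate from Definition \ref{def-us-cohm}. For $(2)\Rightarrow(1)$, assume $R$ is $S$-coherent; then each finitely generated ideal $I$ of $R$ is $S$-finitely presented, hence $u$-$S$-finitely presented with respect to $s$. Since $R$ itself is finitely generated, it is $S$-finite with respect to $s$, so $R$ is $u$-$S$-coherent with respect to $s$. The implication $(3)\Rightarrow(1)$ is obtained verbatim with ``$c$-$S$-finitely presented'' in place of ``$S$-finitely presented''. I do not anticipate a substantive obstacle: the argument is soft once the uniformizer is in place, and the only use of the finiteness of $S$ is in forming the product $s=s_1\cdots s_n$. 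This is exactly the feature that fails for a general multiplicative set, where the witnesses of $u$-$S$-finite presentation of the various finitely generated ideals need not be consolidated into a single element, which is what makes the three notions genuinely diverge outside the finite case.
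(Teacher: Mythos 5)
Your proof is correct and takes essentially the same route as the paper's: both fix the uniformizer $s=s_1\cdots s_n$ and use the factorization $s=ts'$ (with $s'\in S$) to upgrade each ideal's individual witness $t\in S$ to the single element $s$, then conclude via Definition \ref{def-us-cohm}. The only cosmetic difference is that you invoke the torsion-sequence characterizations of $S$-finitely presented and $c$-$S$-finitely presented modules recalled in Section 2, whereas the paper re-derives them inline (choosing a finitely generated $X\subseteq K$ with $s_iK\subseteq X$ and passing to $0\rightarrow K/X\rightarrow F/X\rightarrow I\rightarrow 0$).
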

\begin{proof} Suppose $S=\{s_1,...,s_n\}$ and set $s=s_1\cdots s_n$.

$(1)\Rightarrow (2)$ and  $(1)\Rightarrow (3)$ Follows by Proposition \ref{abel-uscohm}.

$(2)\Rightarrow (1)$  Let $I$ be a finitely generated ideal of $R$. Then there is an exact sequence $0\rightarrow K\rightarrow F\rightarrow I\rightarrow 0$ with $F$ finitely generated free and $K$ $S$-finite.
Let $X$ be a submodule of $K$ such that $s_iK\subseteq X$ for some $s_i\in S$. So $sK/X=0$ Then the exact sequence  $0\rightarrow K/X \rightarrow F/X\rightarrow I\rightarrow 0$ makes $I$ $u$-$S$-finitely presented with respective to $s$. So $R$ is a $u$-$S$-coherent ring.

$(3)\Rightarrow (1)$  Let $I$ be a finitely generated ideal of $R$.  Then there is a finitely presented sub-ideal $J$ of $R$ such that $s_iI\subseteq J=0$. So $s(I/J)=0$. Then the exact sequence  $0\rightarrow I \rightarrow J\rightarrow I/J\rightarrow 0$ makes $I$ $u$-$S$-finitely presented with respective to $s$. So $R$ is a $u$-$S$-coherent ring.
\end{proof}

Let $R$ be a ring, $M$ an $R$-module and $S$ a multiplicative subset of $R$. For any $s\in S$, there is a  multiplicative subset $S_s=\{1,s,s^2,....\}$ of $S$. We denote by $M_s$ the localization of $M$ at $S_s$. Certainly, $M_s\cong M\otimes_RR_s$

\begin{proposition} \label{s-loc-u-noe}
 Let $R$ be a ring and $S$ a multiplicative subset of $R$. If $R$ is a $u$-$S$-coherent ring with respective to some $s\in S$, then  $R_{s}$ is a coherent ring.
\end{proposition}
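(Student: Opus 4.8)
The plan is to reduce coherence of $R_s$ to the uniform $S$-finite presentation of finitely generated ideals of $R$ afforded by Definition~\ref{def-us-cohr}, and then exploit that localizing at the multiplicative set $S_s=\{1,s,s^2,\dots\}$ annihilates every module killed by $s$. First I would take an arbitrary finitely generated ideal $\mathfrak{a}$ of $R_s$. After clearing denominators, each generator $\frac{a_i}{s^{k_i}}$ is, up to the unit $s^{k_i}\in R_s$, equal to $\frac{a_i}{1}$, so that $\mathfrak{a}=I_s$ for the finitely generated ideal $I=\langle a_1,\dots,a_n\rangle$ of $R$.

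Next, since $R$ is $u$-$S$-coherent with respect to $s$, the finitely generated submodule $I$ of $R$ is $u$-$S$-finitely presented with respect to the same $s$; that is, there is an exact sequence
\[
0\to T_1\to F\xrightarrow{f} I\to T_2\to 0
\]
with $F$ finitely presented and $sT_1=sT_2=0$. I would then localize this sequence at $S_s$. Because $s$ is invertible in $R_s$ while $sT_1=sT_2=0$, we have $(T_1)_s=(T_2)_s=0$, so localization collapses the four-term sequence to an isomorphism $F_s\cong I_s=\mathfrak{a}$ of $R_s$-modules. As finite presentation is preserved under localization, $F_s$ is a finitely presented $R_s$-module, hence so is $\mathfrak{a}$. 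Thus every finitely generated ideal of $R_s$ is finitely presented, i.e.\ $R_s$ is coherent.

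The step I expect to be decisive rather than merely routine is the passage through a single, fixed $s$: the argument works only because $u$-$S$-coherence with respect to $s$ supplies one element that simultaneously witnesses the $u$-$S$-finite presentation of \emph{every} finitely generated ideal. This uniformity is exactly what allows localization at the one multiplicative set $S_s$ to annihilate the torsion tails $T_1,T_2$ occurring in all these presentations at once. Under ordinary $S$-coherence, where the element witnessing $S$-finite presentation may vary from ideal to ideal, no fixed $S_s$ need kill all of them, and the conclusion for $R_s$ would not follow; so the crux is to keep track that the same $s$ appearing in the hypothesis is the one at which we localize.
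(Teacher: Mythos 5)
Your proposal is correct and follows essentially the same route as the paper's proof: write a finitely generated ideal of $R_s$ as $I_s$ for a finitely generated ideal $I$ of $R$, take the exact sequence $0\to T_1\to F\to I\to T_2\to 0$ with $F$ finitely presented and $sT_1=sT_2=0$ furnished by $u$-$S$-coherence with respect to $s$, and localize at $S_s$ to kill $T_1,T_2$ and obtain $I_s\cong F_s$ finitely presented over $R_s$. Your closing remark on why the fixed, uniform $s$ is the decisive ingredient is also exactly the point the paper exploits (and is what fails for mere $S$-coherence, as Example~\ref{not-uscoh} shows).
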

\begin{proof} Suppose  $R$ is $u$-$S$-coherent ring with respective to  $s\in S$. Let $J$ be a finitely generated ideal of $R_{s}$. Then $J\cong I_s$ for some finitely generated ideal $I$ of $R$. So there is an exact sequence $0\rightarrow T_1\rightarrow K\rightarrow I\rightarrow T_2\rightarrow 0$ with  $K$ finitely presented  and $sT_1=sT_2=0$. Localizing at $S_s$, we have $(T_1)_{s}=(T_2)_{s}=0$. So $J\cong I_s\cong K_s$ which is finitely presented over $R_s$. So $R_{s}$ is a coherent ring.
\end{proof}

Next, we will give an example of a ring which is both $S$-coherent and $c$-$S$-coherent, but not $u$-$S$-coherent.
\begin{example}\label{not-uscoh} Let $R$ be a domain.  Set $S=R-\{0\}$. First, we will show $R$ is  $c$-$S$-coherent. Let $I$ be a nonzero finitely generated ideal of $R$. Suppose $0\not= r\in I$. Then we have  $rI\subseteq Rr\subseteq I$. Since $Rr\cong R$ is finitely presented, $R$ is a $c$-$S$-coherent ring.

Next we will show  $R$ is $S$-coherent.  Let $I$ be a nonzero finitely generated ideal of $R$ generated by nonzero elements $\{a_1,\cdots,a_n\}$. Set $a=a_1\cdots a_n$. Consider the natural exact sequence $0\rightarrow K\rightarrow R^n\xrightarrow{f} I\rightarrow 0$ satisfying $f(e_i)=a_i$ for each $i$. Claim $K$ is $S$-finite with respective to $a$  by induction on $n$. Set $I_k=\langle a_1,\cdots,a_k\rangle$. Suppose $n=1$. Then $K=0$ as $a_1$ is a non-zero-divisor. So the claim trivially holds. Suppose  the claim holds for $n=k$. Now let $n=k+1$. Consider the following commutative diagram with exact rows and columns.
$$\xymatrix@R=20pt@C=25pt{
0 \ar[r]^{} & K_k \ar@{^{(}->}[d]\ar[r]^{} &R^{k} \ar@{^{(}->}[d]\ar[r]^{} &I_k  \ar@{^{(}->}[d]\ar[r]^{} &  0\\
0 \ar[r]^{} & K_{k+1}\ar@{->>}[d]\ar[r]^{} & R^{k+1} \ar@{->>}[d]\ar[r]^{} &I_{k}+Ra_{k+1}\ar@{->>}[d]\ar[r]^{} &  0\\
0 \ar[r]^{} & (I_k:_RRa_{k+1}) \ar[r]^{} & R\ar[r]^{} & (I_{k}+Ra_{k+1})/I_k\ar[r]^{} &  0,\\}$$
Since $a(I_k:_RRa_{k+1})\subseteq aR\subseteq (I_k:_RRa_{k+1})$, So $(I_k:_RRa_{k+1})$ is $S$-finite with respective to $a$. By induction, $K_k$ is $S$-finite with respective to $a$. It is easy to check $K_{k+1}$ is  also $S$-finite with respective to $a$. So the claim holds. Consequently,  $R$ is $S$-coherent.

Now, let  $R$ is a domain such that  $R_{s}$ is not coherent for any $s\not=0$. For example, $R=\mathbb{Q}+x\mathbb{R}[[x]]$ be the subring of formal power series ring $T=\mathbb{R}[[x]]$ with constants in real numbers $\mathbb{R}$, where $\mathbb{Q}$ is the set of all rational numbers.  Indeed, let $0\not=s=a+xf(x)\in R$. We divide it into two cases. Case I: $a\not=0$. In this case, $s$ is a unit in $R$, and so  $R_{s}\cong R$ which is not coherent by \cite[Theorem 5.2.3]{g} . Case II:  $a=0$. In this case, $R_s\cong \mathbb{Q}+(x\mathbb{R}[[x]])_{xf(x)}\cong \mathbb{Q}+(x\mathbb{R}[[x]])_{x}$. So $R_s$ can fit into a Milnor square of type II:
$$\xymatrix@R=32pt@C=35pt{
R_s\ar@{^{(}->}[r]^{}\ar@{->>}[d]_{} & \mathbb{R}[[x]][x^{-1}] \ar@{->>}[d]\\
\mathbb{Q} \ar@{^{(}->}[r]^{} & \mathbb{R}.}$$
Hence $R_s$ is not a coherent domain by \cite[Theorem 8.5.17]{fk16}. 
 We will show $R$ is not a $u$-$S$-coherent ring. On contrary, suppose  $R$ is $u$-$S$-coherent. Then there is a $s\not=0$ such that $R_s$ is a coherent ring by Proposition \ref{s-loc-u-noe}, which is a contradiction.
 \end{example}

\section{module-theoretic characterizations of  uniformly $S$-coherent rings}

In this section, we will characterize uniformly $S$-coherent rings in terms of $u$-$S$-flat modules and $u$-$S$-injective modules.  The following lemma is basic and of independent interest.

\begin{lemma}\label{ele-dire}
Let $R$ be a ring, $r\in R$ and $M$ an $R$-module. Suppose $N$ is a pure submodule of $M$. Then we have the following natural isomorphism  $$\frac{rM}{rN}\cong r(\frac{M}{N}).$$ Consequently, suppose $\{M_i\mid i\in\Lambda\}$ is a direct system of $R$-modules. Then $$r\lim\limits_{\longrightarrow}M_i\cong \lim\limits_{\longrightarrow}(rM_i).$$
\end{lemma}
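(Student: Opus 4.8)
The plan is to prove the module isomorphism first and then read off the direct-limit statement from it.

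\emph{The isomorphism $rM/rN\cong r(M/N)$.} I would start from the canonical projection $\pi\colon M\to M/N$ and restrict it to $rM$. Every element of $r(M/N)$ has the form $r\bar m=\overline{rm}$ with $m\in M$, so the image $\pi(rM)$ is exactly $r(M/N)$, while the kernel of $\pi|_{rM}$ is $rM\cap N$. The first isomorphism theorem then gives $rM/(rM\cap N)\cong r(M/N)$, and the whole statement reduces to the single identity
\[
rM\cap N=rN.
\]
The inclusion $rN\subseteq rM\cap N$ is immediate, so all the content is in the reverse inclusion, and this is precisely where purity of $N$ enters.

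\emph{Using purity.} To obtain $rM\cap N\subseteq rN$, I would apply $-\otimes_R R/Rr$ to the pure-exact sequence $0\to N\to M\to M/N\to 0$. Since $N$ is pure in $M$, tensoring preserves exactness, and as $-\otimes_R R/Rr$ is naturally $(-)/r(-)$ (using $X\otimes_R R/Rr\cong X/rX$), the left-hand map $N/rN\to M/rM$, $n+rN\mapsto n+rM$, stays injective. Its injectivity says exactly that $n\in N\cap rM$ forces $n\in rN$, i.e. $rM\cap N=rN$. (Equivalently, one may invoke Cohn's solvability criterion for purity: the equation $rx=n$ with $n\in N$ is solvable in $M$, hence solvable in $N$.) This completes the first isomorphism.

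\emph{The direct-limit statement.} I would present $\varinjlim M_i$ through its canonical presentation. Writing $F=\bigoplus_{i}M_i$ and letting $D$ be the submodule generated by the elements $\lambda_i(x)-\lambda_j(\phi_{ij}(x))$, where $\lambda_i$ are the coproduct injections and $\phi_{ij}$ the transition maps, one has $\varinjlim M_i\cong F/D$; and it is classical that for a \emph{directed} system $D$ is a pure submodule of $F$ (tensoring the canonical sequence with any $X$ reproduces the same presentation for $\{M_i\otimes X\}$, which is again exact). Applying the first part of the lemma with $(M,N)=(F,D)$ yields $rF/rD\cong r(F/D)\cong r\varinjlim M_i$. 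It remains to identify the left side: since $rF=\bigoplus_i rM_i$ and $\{rM_i\}$ is again a directed system under the restricted maps $\phi_{ij}$, its relation submodule is generated by $\lambda_i(rx)-\lambda_j(\phi_{ij}(rx))=r\bigl(\lambda_i(x)-\lambda_j(\phi_{ij}(x))\bigr)$, which is exactly $rD$. Hence $rF/rD\cong\varinjlim(rM_i)$, and combining the two isomorphisms gives $r\varinjlim M_i\cong\varinjlim(rM_i)$.

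\emph{Main obstacle.} The only genuinely nontrivial point is the purity identity $rM\cap N=rN$; everything else is diagram chasing and bookkeeping. A secondary point to verify carefully is that the relation submodule of $\{rM_i\}$ equals $rD$ rather than merely containing it, which relies on every element of $rM_i$ having the form $rx$ with $x\in M_i$. One could also bypass the pure presentation altogether, noting that multiplication by $r$ is an endomorphism of the system $\{M_i\}$ whose levelwise image is $rM_i$, so exactness of $\varinjlim$ preserves images and yields $\varinjlim(rM_i)\cong r\varinjlim M_i$ directly; but the route through the first part is what the word ``Consequently'' suggests.
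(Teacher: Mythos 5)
Your proposal is correct and follows essentially the same route as the paper: the key purity input (the equation $rx=n$ solvable in $M$ is solvable in $N$, i.e.\ $rM\cap N=rN$) is exactly what the paper uses to prove injectivity of its explicit map $rM/rN\to r(M/N)$, and your direct-limit argument via the canonical pure presentation $0\to D\to\bigoplus_i M_i\to\varinjlim M_i\to 0$ with the identification of the relation submodule of $\{rM_i\}$ as $rD$ is precisely the paper's proof.
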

\begin{proof} Consider the surjective map $f:\frac{rM}{rN}\rightarrow  r(\frac{M}{N})$ defined by $f(rm+rN)=r(m+N)$. It is certainly $R$-linear. We will check it is also well defined. Indeed, $f(rn+rN)=r(n+N)=r(0+N)=0$. So $f$ is an $R$-epimorphism. Let $rm+rN\in \Ker(f)$. Then $rm\in N$. Since  $N$ is a pure submodule of $M$, there is $n\in N$ such that $rm=rn$. So $rm+rN=rn+rN=0$. Hence $f$ is an isomorphism. Suppose $\{(M_i,f_{ij})\mid i,j\in\Lambda\}$ is a direct system of $R$-modules. Then there is a pure exact sequence $0\rightarrow K\rightarrow \bigoplus M_i\rightarrow \lim\limits_{\longrightarrow}M_i\rightarrow 0$ where $K=\langle x-f_{ij}(x)\mid x\in M_i,i\leq j\in I\rangle$ (see \cite[(2.1.1)]{gt}). Note that  $\{(rM_i,f_{ij})\mid i,j\in\Lambda\}$ is also a direct system of $R$-modules. We have the following equivalence $$\lim\limits_{\longrightarrow}(rM_i)\cong \frac{\bigoplus rM_i}{K'}=\frac{r\bigoplus M_i}{rK}\cong r \frac{\bigoplus M_i}{K}\cong r\lim\limits_{\longrightarrow}M_i$$  where $K'=\langle rx-f_{ij}(rx)\mid rx\in rM_i,i\leq j\in I\rangle$.
\end{proof}

\begin{lemma}\label{inj-cog}
Let $E$ be an injective cogenerator. Then the following assertions are equivalent.
\begin{enumerate}
    \item $T$ is  uniformly $S$-torsion with respect to $s$.
      \item   $\Hom_R(T,E)$  is  uniformly $S$-torsion with respect to $s$.
\end{enumerate}
\end{lemma}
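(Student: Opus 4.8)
The plan is to prove the two implications separately, exploiting the adjunction/contravariance of $\Hom_R(-,E)$ together with the fact that $E$ is a cogenerator, meaning $\Hom_R(-,E)$ is a faithful exact functor (it reflects zero objects and zero morphisms). The key observation driving both directions is that for a fixed $s\in S$, the scalar multiplication $s\cdot\Id_T:T\to T$ is dualized by $\Hom_R(-,E)$ to the scalar multiplication $s\cdot\Id_{\Hom_R(T,E)}$ on $\Hom_R(T,E)$, because $\Hom_R(-,E)$ is $R$-linear and so commutes with multiplication by the central element $s$.

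First I would establish $(1)\Rightarrow(2)$, which is the easy direction. Suppose $sT=0$, i.e. the map $s\cdot\Id_T$ is the zero map. Applying the contravariant functor $\Hom_R(-,E)$ and using its $R$-linearity, the induced map $\Hom_R(s\cdot\Id_T,E)$ equals $s\cdot\Id_{\Hom_R(T,E)}$; since $s\cdot\Id_T=0$ the induced map is also $0$, hence $s\,\Hom_R(T,E)=0$, so $\Hom_R(T,E)$ is uniformly $S$-torsion with respect to the same $s$. This direction needs only functoriality and linearity and does not use that $E$ is a cogenerator.

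For $(2)\Rightarrow(1)$, I would run the same computation in reverse but now invoke that $E$ is a cogenerator. Assume $s\,\Hom_R(T,E)=0$, i.e. $\Hom_R(s\cdot\Id_T,E)=s\cdot\Id_{\Hom_R(T,E)}=0$. Thus the dual of the morphism $s\cdot\Id_T\colon T\to T$ is the zero morphism on $\Hom_R(T,E)$. Since $E$ is an injective cogenerator, the functor $\Hom_R(-,E)$ is faithful, so a morphism whose image under $\Hom_R(-,E)$ is zero must itself be zero; therefore $s\cdot\Id_T=0$, i.e. $sT=0$, giving uniform $S$-torsion with respect to the same $s$. Concretely, faithfulness here amounts to the standard cogenerator property: if $0\neq st$ for some $t\in T$, then $R(st)\hookrightarrow T$ admits a nonzero map to $E$, and precomposing with $s\cdot\Id_T$ produces an element of $\Hom_R(T,E)$ not killed by $s$, a contradiction.

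The main obstacle, such as it is, lies in making the identification $\Hom_R(s\cdot\Id_T,E)=s\cdot\Id_{\Hom_R(T,E)}$ fully rigorous and in articulating precisely why the cogenerator hypothesis forces a morphism killed by the dual functor to vanish; both are standard but should be stated carefully so that the \emph{same} element $s$ is preserved across the equivalence, which is exactly the point of the ``uniform'' refinement. No delicate estimates or diagram chases beyond this are required, so I expect the proof to be short.
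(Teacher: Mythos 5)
Your proposal is correct, and it is a functorial repackaging of what the paper does rather than a literally identical argument. For $(1)\Rightarrow(2)$ the paper simply cites \cite[Lemma 4.2]{QKWCZ21}; your direct computation via the identity $\Hom_R(s\cdot\Id_T,E)=s\cdot\Id_{\Hom_R(T,E)}$ (i.e.\ $g(st)=(sg)(t)$) is a perfectly good self-contained substitute. For $(2)\Rightarrow(1)$ the paper argues at the level of the submodule $sT$: it takes an arbitrary $f\colon sT\to E$, uses \emph{injectivity} of $E$ to extend it to $g\colon T\to E$, computes $f(st)=g(st)=(sg)(t)=0$, concludes $\Hom_R(sT,E)=0$, and then invokes the cogenerator property of $E$ applied to the module $sT$. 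You instead argue at the level of the morphism $s\cdot\Id_T$: its dual is $s\cdot\Id_{\Hom_R(T,E)}=0$, and faithfulness of $\Hom_R(-,E)$ forces $s\cdot\Id_T=0$. The two arguments rest on the same computation, but yours, in its clean form (using the element-separating formulation of ``cogenerator'': for $0\neq st\in T$ there is $h\colon T\to E$ with $h(st)\neq 0$, whence $sh\neq 0$), never needs injectivity of $E$, so it proves the lemma for an arbitrary cogenerator; the paper's route genuinely uses injectivity to pass from maps on $sT$ to maps on $T$. One caution: your concrete parenthetical justification of faithfulness starts from a nonzero map defined only on $R(st)$, and such a map cannot be precomposed with $s\cdot\Id_T$ until it has been extended to all of $T$ --- which requires either injectivity of $E$ (at which point you have reproduced the paper's proof) or a direct appeal to the separating form of the cogenerator property; state which one you are using and the argument is complete.
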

\begin{proof} $(1)\Rightarrow (2)$: Follows from \cite[Lemma 4.2]{QKWCZ21}.

$(2)\Rightarrow (1)$: Let $f:sT\rightarrow E$ be an $R$-homomorphism and $i:sT\rightarrow T$ the embedding map. Since $E$ is injective, there exists an an $R$-homomorphism  $g:T\rightarrow E$ such that $f=gi$. Let $st\in sT$, we have $f(st)=sg(t)=0$ as  $s\Hom_R(T,E)=0$ . So  $\Hom_R(sT,E)=0$. Hence $sT=0$ as $E$ is an injective cogenerator.
\end{proof}

A  multiplicative subset  $S$ of $R$  is said to be \emph{regular}  if it is composed of non-zero-divisors. 
Next, we give some new characterizations of $u$-$S$-flat modules
\begin{proposition}\label{flat-FP-injective}
Let $R$ be a ring, $S$ a multiplicative subset of $R$. Then the following assertions are equivalent.
\begin{enumerate}
    \item $F$ is $u$-$S$-flat.
     \item there exists an element $s\in S$ satisfying that $\Tor_1^R(N,F)$ is uniformly $S$-torsion with respect to $s$ for any finitely presented $R$-module $N$.
      \item   $\Hom_R(F,E)$ is $u$-$S$-injective for any injective module $E$.
     \item   $\Hom_R(F,E)$ is $u$-$S$-absolutely pure for any injective module $E$.
    \item  if $E$ is an injective cogenerator, then $\Hom_R(F,E)$ is $u$-$S$-injective.
       \item  if $E$ is an injective cogenerator, then $\Hom_R(F,E)$ is $u$-$S$-absolutely pure.
\end{enumerate}
Moreover, if $S$ is regular, then all above are equivalent to the following assertions.

\ $(7)$\  there exists  $s\in S$ satisfying that $\Tor_1^R(R/I,F)$ is uniformly $S$-torsion with respect to $s$ for any ideal $I$ of $R$.

\ $(8)$\    there exists  $s\in S$ satisfying that, for any ideal $I$ of $R$, the natural homomorphism  $\sigma_I:I\otimes_RF\rightarrow IF$ is a $u$-$S$-isomorphism with respect to $s$.

\ $(9)$\ there exists  $s\in S$ satisfying that $\Tor_1^R(R/K,F)$ is uniformly $S$-torsion with respect to $s$ for any finite generated ideal $K$ of $R$.

\ $(10)$\    there exists  $s\in S$ satisfying that, for any finite generated ideal $K$ of $R$, the natural homomorphism  $\sigma_K:K\otimes_RF\rightarrow KF$ is a $u$-$S$-isomorphism with respect to $s$.
\end{proposition}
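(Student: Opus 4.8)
The plan is to treat condition $(2)$---the existence of a single $s\in S$ annihilating $\Tor_1^R(N,F)$ for every finitely presented $N$---as the hub to which every other condition is tied. Two tools drive the argument: the duality isomorphism $\Hom_R(\Tor_1^R(N,F),E)\cong\Ext_R^1(N,\Hom_R(F,E))$, valid for any injective $E$ because $\Hom_R(-,E)$ is then exact and the Hom--tensor adjunction turns a projective resolution of $N$ into both computations; and Lemma \ref{ele-dire}, which lets a fixed bound $s$ survive passage to direct limits. The equivalence $(1)\Leftrightarrow(2)$ I would quote as the effective $\Tor$-description of $u$-$S$-flatness, and then upgrade ``all finitely presented $N$'' to ``all $R$-modules $M$'' by writing $M=\lim\limits_{\longrightarrow}N_i$ with $N_i$ finitely presented, so that $\Tor_1^R(M,F)=\lim\limits_{\longrightarrow}\Tor_1^R(N_i,F)$ stays $u$-$S$-torsion with respect to the same $s$ by Lemma \ref{ele-dire}.

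For the character-module conditions I would establish $(2)\Leftrightarrow(6)$ first and most directly: since $u$-$S$-absolute purity of $\Hom_R(F,E)$ tests $\Ext_R^1(N,\Hom_R(F,E))$ against finitely presented $N$, the displayed duality rewrites it as $s\Hom_R(\Tor_1^R(N,F),E)=0$, and for an injective cogenerator $E$ Lemma \ref{inj-cog} converts this back and forth with $s\Tor_1^R(N,F)=0$, which is exactly $(2)$. The remaining conditions $(3),(4),(5)$ are then reached from the all-modules form of $(2)$ obtained above, by feeding the uniform bound through the same duality for an arbitrary injective $E$ (so that $s\Ext_R^1(M,\Hom_R(F,E))=0$ for every $M$), together with the standard facts that an injective cogenerator is in particular injective and that a $u$-$S$-injective module is $u$-$S$-absolutely pure; the all-ideals quantifier hidden in $u$-$S$-injectivity is again absorbed by the direct-limit propagation of Lemma \ref{ele-dire}.

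In the regular case I would first clear away the ideal-theoretic equivalences that cost nothing extra. Tensoring $0\to I\to R\to R/I\to 0$ with $F$ yields an exact sequence identifying $\Ker(\sigma_I)$ with $\Tor_1^R(R/I,F)$ while exhibiting $\sigma_I$ as a surjection onto $IF$; hence $\sigma_I$ is a $u$-$S$-isomorphism with respect to $s$ precisely when $s\Tor_1^R(R/I,F)=0$, giving $(7)\Leftrightarrow(8)$ and $(9)\Leftrightarrow(10)$. The passage $(9)\Leftrightarrow(7)$ between finitely generated and arbitrary ideals is once more the direct-limit argument, and $(2)\Rightarrow(9)$ is immediate because $R/K$ is finitely presented when $K$ is finitely generated.

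The one substantive direction, and the step I expect to be the crux, is the return $(9)\Rightarrow(1)$, where the bound must stay uniform. A naive d\'evissage that assembles a general finitely presented module from the cyclic quotients $R/K$ through short exact sequences multiplies the annihilator into a power $s^{n}$ depending on the number of generators, which is not uniform; and unlike condition $(5)$, which already packages a strong lifting property, condition $(9)$ supplies only cyclic $\Tor$ data and cannot be bootstrapped directly. This is exactly where I expect regularity of $S$ to be essential: localizing at $S_s=\{1,s,s^2,\dots\}$, condition $(9)$ reads $\Tor_1^{R_s}(R_s/J,F_s)=0$ for every finitely generated ideal $J$ of $R_s$, so $F_s$ is a flat $R_s$-module by the classical criterion, and since $S$ is regular the inclusion $R\hookrightarrow R_s$ is injective, so flatness of $F_s$ over $R_s$ pulls back to $u$-$S$-flatness of $F$ and closes the cycle through $(1)\Leftrightarrow(2)$. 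I would model this localization step on Proposition \ref{s-loc-u-noe}.
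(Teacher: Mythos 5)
Most of your plan coincides with the paper's own proof: condition $(2)$ as the hub, the duality $\Hom_R(\Tor_1^R(N,F),E)\cong\Ext_R^1(N,\Hom_R(F,E))$ together with Lemma \ref{inj-cog} for the character-module conditions $(3)$--$(6)$, the identification of $\Ker(\sigma_I)$ with $\Tor_1^R(R/I,F)$ for $(7)\Leftrightarrow(8)$ and $(9)\Leftrightarrow(10)$, and $(2)\Rightarrow(9)$. Your passage $(9)\Rightarrow(7)$ by writing $R/I=\lim\limits_{\longrightarrow}R/K$ over finitely generated $K\subseteq I$ and invoking Lemma \ref{ele-dire} is a legitimate variant of the paper's element-chasing proof of $(10)\Rightarrow(8)$. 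One caveat: $(1)\Rightarrow(2)$ is not merely a quotation, because the citable characterization of $u$-$S$-flatness gives a torsion bound per module, not one $s$ valid for all finitely presented $N$ at once; the paper manufactures the uniform $s$ by applying that statement to the single module $\bigoplus R^n/K$, summed over all isomorphism types of finitely presented modules, and using that $\Tor$ commutes with direct sums.

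The genuine gap is your crux step $(9)\Rightarrow(1)$. The implication ``$F_s$ flat over $R_s$ implies $F$ is $u$-$S$-flat'' is false, even for regular $S$: localization destroys exactly the uniformity you are trying to protect. Concretely, take $R=\mathbb{Z}$, $S=\{1,2,4,8,\dots\}$ (regular), and $F=\bigoplus_{n\geq1}\mathbb{Z}/2^n\mathbb{Z}$. Then $F_2=0$ is flat over $\mathbb{Z}[1/2]$, yet $\Tor_1^{\mathbb{Z}}(\mathbb{Z}/2^a\mathbb{Z},F)\cong\bigoplus_{n\geq 1}\mathbb{Z}/2^{\min(a,n)}\mathbb{Z}$ is annihilated by no power of $2$ independent of $a$, so $F$ is not $u$-$S$-flat (and, consistently, $F$ does not satisfy $(9)$ either). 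What your localization argument actually proves is that $(9)$ implies $F_S$ is flat over $R_S$, i.e., that $F$ is $S$-flat; but the distinction between $S$-flat and $u$-$S$-flat is precisely the point of the ``uniform'' theory, and the one-way nature of Proposition \ref{s-loc-u-noe} (cf.\ Example \ref{not-uscoh}) is a symptom of the same phenomenon. So your cycle does not close.

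The paper instead never leaves the uniform world, and uses regularity differently: it proves $(7)\Rightarrow(5)$. Since $s$ is a non-zero-divisor and $E$ is injective, $E$, and hence $\Hom_R(F,E)$, is $s$-divisible; combining this with the fact, obtained from $(7)$ via duality, that $\Ext_R^1(R/I,\Hom_R(F,E))$ is $u$-$S$-torsion with respect to the fixed $s$ for every ideal $I$, the criterion of \cite[Proposition 4.9]{QKWCZ21} shows $\Hom_R(F,E)$ is $u$-$S$-injective, which is condition $(5)$ and re-enters the already-established cycle $(1)$--$(6)$. To repair your argument, replace the localization step by something of this kind, carrying the fixed $s$ through a character module rather than through $R_s$.
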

\begin{proof}
$(1)\Rightarrow (2)$: Set the set $\Gamma=\{(K,R^n)\mid K$ is a finitely generated submodule of $ R^n$ and $n<\infty\}.$ Define $M=\bigoplus\limits_{(K,R^n)\in \Gamma}R^n/K$. Then  $s\Tor_1^R(M,F)=s\bigoplus\limits_{(K,R^n)\in \Gamma}\Tor_1^R(R^n/K,F) =0$ for some $s\in S$. Let $N$ be a finitely presented $R$-module, then $N\cong R^n/K$ for some $(K,R^n)\in \Gamma$. Hence $\Tor_1^R(N,F)=0$ is uniformly $S$-torsion with respect to $s$.

$(2)\Rightarrow (1)$:  Let $M$ be an $R$-module. Then $M=\lim\limits_{\longrightarrow}N_i$ for some direct system of finitely presented $R$-modules $\{N_i\}$. So $s\Tor_1^R(M,F)=s\Tor_1^R(\lim\limits_{\longrightarrow}N_i,F)\cong s(\lim\limits_{\longrightarrow}\Tor_1^R(N_i,F))\cong \lim\limits_{\longrightarrow}(s\Tor_1^R(N_i,F))=0$ by Lemma \ref{ele-dire}. Hence $F$ is $u$-$S$-flat by \cite[Theorem 3.2]{zwz21}

$(1)\Rightarrow (3)$: Let $M$ be an $R$-module and $E$ an injective $R$-module. Since $M$ is $u$-$S$-flat,  then $\Tor_1^R(M,F)$  is uniformly $S$-torsion. Thus $\Ext_R^1(M,\Hom_R(F,E))\cong\Hom_R(\Tor_1^R(M,F),E)$ is also uniformly $S$-torsion by \cite[Lemma 4.2]{QKWCZ21}.  Thus $\Hom_R(F,E)$ is $u$-$S$-injective by \cite[Theorem 4.3]{QKWCZ21}.

$(3)\Rightarrow (4)\Rightarrow (6)$ and $(3)\Rightarrow (5)\Rightarrow (6)$: Trivial.

$(6)\Rightarrow (2)$:  Let  $E$ an injective cogenerator. Since $\Hom_R(F,E)$ is $u$-$S$-absolutely pure,  there exists  $s\in S$ such that  $\Hom_R(\Tor_1^R(N,F),E)\cong \Ext_R^1(N,\Hom_R(F,E))$ is  uniformly $S$-torsion with respect to $s$ for any finitely presented $R$-module $N$. Since $E$ is an injective cogenerator, $\Tor_1^R(N,F)$ is uniformly $S$-torsion with respect to $s$ for any finitely presented $R$-module $N$ by Lemma \ref{inj-cog}.

$(2)\Rightarrow (9)$, $(7)\Rightarrow (9)$, $(7)\Leftrightarrow (8)$ and $(9)\Leftrightarrow (10)$: Obvious.

$(10)\Rightarrow (8)$: Let $\sum\limits_{i=1}^na_i\otimes x_i\in \Ker(\sigma_I)$. Let $K$ be the finitely generated ideal generated by $\{a_i\mid i=1,\cdots,n\}$.
Consider the following commutative diagram:

$$\xymatrix@R=25pt@C=25pt{
K\otimes_RF\ar[d]^{\sigma_K}\ar[r]^{i\otimes 1}&I\otimes_RF \ar[d]^{\sigma_I} \\
KF\ar[r]^{i'}&IF \\ }$$
Let $\sum\limits_{i=1}^na_i\otimes x_i$ be the element in $K\otimes_RF$ such that $i\otimes 1(\sum\limits_{i=1}^na_i\otimes x_i)=\sum\limits_{i=1}^na_i\otimes x_i\in I\otimes_RF$.
Since $i' \sigma_K(\sum\limits_{i=1}^na_i\otimes x_i)= \sigma_I (i\otimes 1(\sum\limits_{i=1}^na_i\otimes x_i))=\sigma_I (\sum\limits_{i=1}^na_i\otimes x_i)=0$, we have $\sum\limits_{i=1}^na_i\otimes x_i\in \Ker(\sigma_K)$ since $i' $ is a monomorphism. Then  $s\sum\limits_{i=1}^na_i\otimes x_i=0\in K\otimes_RF$. So $s\sum\limits_{i=1}^na_i\otimes x_i=s i\otimes 1(\sum\limits_{i=1}^na_i\otimes x_i)=i\otimes 1(s\sum\limits_{i=1}^na_i\otimes x_i)=0 \in I\otimes_RF$. Hence $s \Ker(\sigma_I)=0$.

Now assume the multiplicative subset  $S$ is regular.

 $(7)\Rightarrow (5)$ Let $E$ be an injective cogenerator. Since  $\Tor_1^R(R/I,F)$ is uniformly $S$-torsion with respect to $s$, we have $\Hom_R(\Tor_1^R(R/I,F),E)\cong \Ext_R^1(R/I,\Hom_R(F,E))$ is uniformly $S$-torsion with respect to $s$ by Lemma \ref{inj-cog}. Since $s$ is regular and $E$ is injective,  we have $E$ is $s$-divisible. So  $\Hom_R(F,E)$ is also $s$-divisible.  Hence $\Hom_R(F,E)$ is $u$-$S$-injective by \cite[Proposition 4.9]{QKWCZ21}.
\end{proof}

In 1960, Chase also characterized coherent rings in terms of flat modules (see \cite[Theorem 2.1]{C60}). Now, we are ready to give  a ``uniform'' $S$-version of Chase Theorem.

\begin{theorem}\label{w-coh-chase} \textbf{$($Chase Theorem for $u$-$S$-coherent rings$)$}
Let $R$ be a ring and $S$ is a regular  multiplicative subset of $R$. Then the following assertions are equivalent:
\begin{enumerate}
    \item $R$ is a $u$-$S$-coherent ring.
       \item there is $s\in S$ such that any direct product of flat modules is $u$-$S$-flat with respective to $s$.
       \item  there is $s\in S$ such that any direct product of projective modules is $u$-$S$-flat  with respective to $s$.
       \item  there is $s\in S$ such that any direct product of $R$ is $u$-$S$-flat  with respective to  $s$.
\end{enumerate}
\end{theorem}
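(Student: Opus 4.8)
The plan is to prove the cycle of implications. The implications $(2)\Rightarrow(3)\Rightarrow(4)$ are immediate: every projective module is flat, so (2) specializes to (3), and each copy of $R$ is projective, so a product of copies of $R$ is a product of projectives and (3) specializes to (4). The substance lies in $(1)\Rightarrow(2)$ and $(4)\Rightarrow(1)$, and throughout I would exploit the ideal-theoretic description of $u$-$S$-flatness from Proposition \ref{flat-FP-injective}: since $S$ is regular, $F$ is $u$-$S$-flat if and only if there is a single $s\in S$ with $s\,\Tor_1^R(R/K,F)=0$ for every finitely generated ideal $K$ (condition $(9)$).

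For $(1)\Rightarrow(2)$, suppose $R$ is $u$-$S$-coherent with respect to $s$, so every finitely generated ideal $K$ is $u$-$S$-finitely presented with respect to $s$, i.e.\ $u$-$S$-isomorphic to a finitely presented module. Let $\{F_\lambda\}$ be flat modules and put $F=\prod_\lambda F_\lambda$. The key object is the canonical map $\mu_K\colon K\otimes_R F\to\prod_\lambda(K\otimes_R F_\lambda)$. First I would record the auxiliary fact that $\mu_K$ is a $u$-$S$-isomorphism with a bound controlled by $s$: this follows by comparing $K$ with a genuinely finitely presented module $K'$ through the naturality square for $\mu$, using that $\mu_{K'}$ is an isomorphism and that tensoring and taking products each preserve $u$-$S$-isomorphisms with a uniform $s$-bound. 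Because each $F_\lambda$ is flat, $\Tor_1^R(R/K,F_\lambda)=0$, so each $\sigma_K^\lambda\colon K\otimes_R F_\lambda\to KF_\lambda$ is injective; a short diagram chase then identifies $\Tor_1^R(R/K,F)=\Ker(K\otimes_R F\to F)$ with $\Ker(\mu_K)$. Hence $s\,\Tor_1^R(R/K,F)=0$ uniformly in $K$, and Proposition \ref{flat-FP-injective}$(9)$ yields that $F$ is $u$-$S$-flat with respect to $s$.

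For $(4)\Rightarrow(1)$, I would adapt Chase's relation-module argument. Fix $s\in S$ with every product $R^\Lambda$ $u$-$S$-flat with respect to $s$. Given a finitely generated ideal $K=\langle a_1,\dots,a_n\rangle$, write $0\to L\to R^n\xrightarrow{\pi} K\to 0$ for the relation sequence and compare the natural maps $\nu$ into products along the rows, using that $\nu_{R^n}\colon R^n\otimes_R R^\Lambda\to (R^n)^\Lambda$ is an isomorphism. Since $\Tor_1^R(R/K,R^\Lambda)=\Ker(\nu_K)$ is killed by $s$ and $\nu_K$ is surjective ($K$ is finitely generated), a snake-style chase shows $s\,\Coker(\nu_L)=0$, i.e.\ $\nu_L\colon L\otimes_R R^\Lambda\to L^\Lambda$ is a $u$-$S$-epimorphism with respect to $s$, uniformly in $\Lambda$. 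Now specialize to $\Lambda=L$ and consider the universal element $\xi=(x)_{x\in L}\in L^L$: writing $s\xi=\nu_L\big(\sum_{i=1}^k l_i\otimes\rho_i\big)$ gives $sx=\sum_{i=1}^k l_i\,\rho_i(x)$ for all $x\in L$, whence $sL\subseteq\langle l_1,\dots,l_k\rangle$ and $L$ is $S$-finite with respect to $s$. The sequence $0\to L\to R^n\to K\to 0$ then makes $K$ $u$-$S$-finitely presented with respect to $s$, so $R$ is $u$-$S$-coherent with respect to $s$ by Definition \ref{def-us-cohr}.

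I expect the main obstacle to be the uniformity bookkeeping rather than any single hard idea: in $(1)\Rightarrow(2)$ one must guarantee that the $s$-bound witnessing that $\mu_K$ is a $u$-$S$-isomorphism is independent of $K$ (it is, because it is inherited from the single $s$ of $u$-$S$-coherence through the elementary lemmas that tensoring and products preserve $u$-$S$-isomorphisms with controlled bounds), and in $(4)\Rightarrow(1)$ one must run the diagram chase keeping a single $s$ throughout and then verify that the universal-element trick produces a generating set of $sL$ with the same $s$. Once these bounds are tracked, both directions follow the classical Chase template.
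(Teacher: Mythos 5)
Your proposal is correct and takes essentially the same route as the paper's proof: both directions follow Chase's template, with $(1)\Rightarrow(2)$ comparing $K\otimes_R\prod_\lambda F_\lambda$ against $\prod_\lambda(K\otimes_R F_\lambda)$ via a finitely presented approximation of the ideal (where the comparison map is an honest isomorphism) and concluding by the ideal-theoretic criterion of Proposition \ref{flat-FP-injective}, and $(4)\Rightarrow(1)$ using the relation sequence $0\to L\to R^n\to K\to 0$ together with the universal element $(x)_{x\in L}\in L^L$ to exhibit $L$ as $S$-finite with the same $s$. The only cosmetic difference is that you package the paper's explicit diagram chases into the auxiliary fact that tensoring and taking products preserve $u$-$S$-isomorphisms with a uniform bound (a fixed power of $s$), which is precisely what those chases establish.
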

\begin{proof}

$(2)\Rightarrow (3)\Rightarrow (4)$ Trivial.

$(1)\Rightarrow (2)$ Suppose $R$ is $u$-$S$-coherent with respective to some $s\in S$. Let $\{F_i\mid i\in \Lambda\}$ be a family of flat $R$-modules and $I$ a finitely generated ideal of $R$. Then $I$ is $u$-$S$-finitely presented with respective to $s$. So we have an exact sequence $0\rightarrow T'\rightarrow K\xrightarrow{f} I\rightarrow T\rightarrow 0$ with $K$ finitely presented and $sT=sT'=0$. Set $\Im(f)=K'$. Consider the following commutative diagrams with rows exact:
$$\xymatrix{
& T'\otimes_R \prodi F_i\ar[d]_{\alpha}\ar[r]^{} & K\otimes_R  \prodi F_i \ar[d]_{\gamma}^{\cong}\ar[r]^{} & K'\otimes_R  \prodi F_i \ar[d]^{\beta}\ar[r]^{} &  0\\
   0 \ar[r]^{} & \prodi (T'\otimes_R F_i )\ar[r]^{} &\prodi (K\otimes_R F_i ) \ar[r]^{} & \prodi (K'\otimes_R F_i) \ar[r]^{} &  0,\\}$$
   and
$$\xymatrix{
& K'\otimes_R \prodi F_i\ar[d]_{\beta}\ar[r]^{} & I\otimes_R  \prodi F_i \ar[d]^{\theta}\ar[r]^{} & T\otimes_R  \prodi F_i \ar[d]^{}\ar[r]^{} &  0\\
   0 \ar[r]^{} & \prodi (K'\otimes_R F_i )\ar[r]^{} &\prodi (I\otimes_R F_i ) \ar[r]^{} & \prodi (T\otimes_R F_i) \ar[r]^{} &  0,\\}$$
By \cite[Lemma 3.8(2)]{gt}, $\gamma$ is an isomorphism. Then $\Ker(\beta)\cong \Coker(\alpha)$ which is $u$-$S$-torsion with respective to $s$. Since $K'$ is finitely generated, we have $\beta$ is an epimorphism by \cite[Lemma 3.8(1)]{gt}. Since $T\otimes_R  \prodi F_i$ and $\Ker(\beta)$ are all $u$-$S$-torsion with respective to $s$, so $\Ker(\theta)$ is also  $u$-$S$-torsion  with respective to $s$.

Now we consider the  following commutative diagram with rows exact:
$$\xymatrix{
0\ar[r]& \Tor_1^R(R/I,\prodi F_i)\ar[d]_{}\ar[r]^{} & I\otimes_R \prodi F_i \ar[d]^{\theta}\ar[r]^{} & R\otimes_R  \prodi F_i \ar[d]^{}\\
   0 \ar@{=}[r] & \prodi\Tor_1^R(R/I, F_i)\ar[r]^{} &\prodi (I\otimes_R F_i ) \ar[r]^{} & \prodi (R\otimes_R F_i),\\}$$
Note $\Tor_1^R(R/I,\prodi F_i)\subseteq \Ker(\theta)$. So $\Tor_1^R(R/I,\prodi F_i)$ is $u$-$S$-torsion with respective to $s$, Hence  $\prodi F_i$ is $u$-$S$-flat (with respective to $s$) by Proposition \ref{flat-FP-injective}.

$(4)\Rightarrow (1)$ Let $I$ be a finitely generated ideal of $R$.  Consider the  following commutative diagram with rows exact:
$$\xymatrix{
&I\otimes_R \prodi R\ar[d]_{g}\ar[r]^{f} & R\otimes_R  \prodi R \ar[d]_{}^{\cong}\ar[r]^{} & R/I\otimes_R  \prodi R \ar[d]_{}^{\cong}\ar[r]^{} &  0\\
   0 \ar[r]^{} & \prodi (I\otimes_RR)\ar[r]^{} &\prod_{i\in I} (R\otimes_RR ) \ar[r]^{} & \prodi (R/I\otimes_RR ) \ar[r]^{} &  0.\\}$$
Since $\prodi R$ is a $u$-$S$-flat module with respective to $s$, then $f$ is a $u$-$S$-monomorphism. So $g$ is also a $u$-$S$-monomorphism with respective to $s$.

Let $0\rightarrow L\rightarrow F\rightarrow I\rightarrow 0$ be an exact sequence with $F$ finitely generated free.
 Consider the  following commutative diagram with rows exact:
$$\xymatrix{
&L\otimes_R \prodi R\ar[d]_{h}\ar[r]^{} & F\otimes_R  \prodi R \ar[d]_{}^{\cong}\ar[r]^{} & I\otimes_R  \prodi R \ar[d]_{}^{g}\ar[r]^{} &  0\\
   0 \ar[r]^{} & \prodi (L\otimes_RR)\ar[r]^{} &\prod_{i\in I} (F\otimes_RR ) \ar[r]^{} & \prodi (I\otimes_RR ) \ar[r]^{} &  0.\\}$$
Since $g$ is a  $u$-$S$-monomorphism with respective to $s$, $h$ is a $u$-$S$-epimorphism with respective to $s$. Set $\Lambda$  equal to the cardinal of $L$. We will show $L$ is $S$-finite with respective to $s$. Indeed, consider the following exact sequence
$$\xymatrix{
L\otimes_R  R^L \ar[rr]^{h}\ar@{->>}[rd] &&L^L \ar[r]^{} & T\ar[r]^{} &  0\\
    &\Im h \ar@{^{(}->}[ru] &&  &   \\}$$
with $T$ a $u$-$S$-torsion module with respective to $s$.
Let $x=(m)_{m\in L}\in L^L$. Then  $sx\subseteq \Im h$.
Subsequently, there exist $m_j\in L, r_{j,i}\in R, i\in L, j=1,\cdots,n$
such that for each $t=1,\dots,k$, we have
$$sx=h(\sum_{j=1}^n m_j\otimes (r_{j,i})_{i\in L})=(\sum_{j=1}^n m_j r_{j,i})_{i\in L}.$$
Set $U=\langle m_j\mid j=1,\dots,n\rangle $ be the finitely generated submodule of $L$. Now, for any $m\in L$, $sm\in \langle \sum_{j=1}^n m_j r_{j,m} \rangle\subseteq  U$, thus the embedding map $U\hookrightarrow L$ is a $u$-$S$-isomorphism with respective to $s$ and so $L$ is  $S$-finite  with respective to $s$. Consequently, $I$ is $u$-$S$-finitely presented with respective to $s$. Hence,  $R$ is $u$-$S$-coherent with respective to $s$.
\end{proof}

In 1982, Matlis \cite[Theorem 1]{M82} showed that a ring $R$ is  coherent   if and only if $\Hom_R(M,E)$ is  flat for any injective modules $M$ and $E$. The rest of this paper is devoted to obtain a ``uniform''  $S$-version of this result.

\begin{lemma}\label{qus-usf}
Let $R$ be a ring, $S$ is a regular  multiplicative subset of $R$ and $E$ an injective cogenerator over $R$. Suppose $\Hom_R(E,E)$ is  $u$-$S$-flat with respective to $s\in S$,  then $\Hom_R(E,E)/R$ is also  $u$-$S$-flat with respective to $s$.
\end{lemma}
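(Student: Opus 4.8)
The plan is to realize $R$ as a \emph{pure} submodule of $H:=\Hom_R(E,E)$ and then transport $u$-$S$-flatness from $H$ to the quotient $H/R$ through the long exact $\Tor$ sequence. First I would recall that the canonical $R$-linear map $\lambda\colon R\to H$, $r\mapsto(e\mapsto re)$, is precisely the evaluation map into the double $E$-dual: writing $N^{+}:=\Hom_R(N,E)$, one has $R^{+}\cong E$ and $R^{++}\cong H$, and under these identifications $\lambda$ coincides with $\sigma_R\colon R\to R^{++}$, $\sigma_R(r)(g)=g(r)$. Since $E$ is a cogenerator, $\sigma_R$ is injective, so $\lambda$ identifies $R$ with a submodule of $H$ and the short exact sequence $0\to R\xrightarrow{\lambda} H\to H/R\to 0$ is the one referred to by the symbol $H/R$.

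The crucial step is to upgrade ``monomorphism'' to ``pure monomorphism''. For an arbitrary $R$-module $N$ I would consider the composite
$$N\cong N\otimes_R R\xrightarrow{1\otimes\lambda} N\otimes_R H\xrightarrow{\theta_N} \Hom_R(\Hom_R(N,E),E)=N^{++},$$
where $\theta_N$ is the natural map sending $n\otimes\phi$ to the functional $g\mapsto\phi(g(n))$; one checks directly that $\theta_N$ is balanced over $R$ (using that $\phi\in\Hom_R(E,E)$ is $R$-linear) and lands in $\Hom_R(N^{+},E)$. A short computation identifies the composite $\theta_N\circ(1\otimes\lambda)$ with the evaluation map $\sigma_N\colon N\to N^{++}$, which is injective because $E$ is a cogenerator. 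Hence $1\otimes\lambda\colon N\to N\otimes_R H$ is injective for \emph{every} $N$, that is, $\lambda$ is a pure monomorphism.

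Finally I would apply $\Tor_\bullet^R(N,-)$ to $0\to R\to H\to H/R\to 0$ for a finitely presented $N$. Since $R$ is flat, $\Tor_1^R(N,R)=0$, and the long exact sequence gives
$$0\to \Tor_1^R(N,H)\to \Tor_1^R(N,H/R)\to \Ker\!\big(N\to N\otimes_R H\big)\to 0.$$
By the purity just established the kernel on the right vanishes, so $\Tor_1^R(N,H/R)\cong\Tor_1^R(N,H)$. As $H$ is $u$-$S$-flat with respect to $s$, Proposition \ref{flat-FP-injective} shows $\Tor_1^R(N,H)$ is uniformly $S$-torsion with respect to $s$ for every finitely presented $N$; hence so is $\Tor_1^R(N,H/R)$, and Proposition \ref{flat-FP-injective} applied in the other direction yields that $H/R=\Hom_R(E,E)/R$ is $u$-$S$-flat with respect to the \emph{same} $s$.

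The main obstacle is the purity step of the second paragraph: constructing the natural transformation $\theta_N$, verifying it is well defined and $R$-linear, and confirming that the composite is the evaluation map $\sigma_N$. Once genuine purity is in hand the conclusion is formal, and in particular the uniform bound $s$ is preserved because purity forces an honest isomorphism $\Tor_1^R(N,H/R)\cong\Tor_1^R(N,H)$ rather than merely a $u$-$S$-torsion extension; note also that regularity of $S$ is not needed here, since the equivalence of (1) and (2) in Proposition \ref{flat-FP-injective} holds for an arbitrary multiplicative set.
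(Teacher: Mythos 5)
Your proof is correct, and it takes a genuinely different route at the two places where the paper leans on outside input. The paper also tensors the short exact sequence $0\to R\to H\to H/R\to 0$ (with $H=\Hom_R(E,E)$) and kills the connecting kernel, but it only does this against cyclic modules $R/I$: the vanishing $\Ker(R/I\otimes i)\cong (HI\cap R)/I=0$ is quoted from Matlis \cite[Proposition 1(2)]{M82}, and the conclusion is then drawn from the ideal-theoretic flatness criterion of Proposition \ref{flat-FP-injective} (items (7)--(10)), which is exactly where the regularity of $S$ is used. You instead prove outright that $R\to H$ is a pure monomorphism, by factoring the evaluation map $\sigma_N\colon N\to N^{++}$ through $N\otimes_R H$ via the natural map $\theta_N$ (your verification that $\theta_N$ is balanced and that $\theta_N\circ(1\otimes\lambda)=\sigma_N$ is sound, and injectivity of $\sigma_N$ from the cogenerator property is standard); this is a strict strengthening of the cyclic-purity fact the paper imports, and it lets you test $u$-$S$-flatness against all finitely presented modules, i.e.\ the equivalence $(1)\Leftrightarrow(2)$ of Proposition \ref{flat-FP-injective}, which indeed holds for an arbitrary multiplicative set. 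What each approach buys: the paper's proof is shorter but depends on the cited purity result and on $S$ being regular; yours is self-contained and shows the regularity hypothesis in this lemma is superfluous. Both preserve the uniform bound $s$ --- in the paper because the surjection $\Tor_1^R(R/I,H)\twoheadrightarrow\Tor_1^R(R/I,H/R)$ already transfers the bound (so your final remark that a full isomorphism is needed for uniformity is slightly stronger than necessary), in yours because purity upgrades that surjection to an isomorphism $\Tor_1^R(N,H)\cong\Tor_1^R(N,H/R)$ for every finitely presented $N$.
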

\begin{proof} Let $I$ be an ideal of $R$. Set $H=\Hom_R(E,E)$. Let $i:R\rightarrowtail H$ be the multiplicative map. Suppose $H$ is  $u$-$S$-flat with respective to $s\in S$. Then there is a long exact sequence  $$\Tor_1^R(R/I,H)\rightarrow\Tor_1^R(R/I,H/R)\rightarrow R/I\otimes_RR\xrightarrow{R/I\otimes i} R/I\otimes H.$$
Note that $\Ker(R/I\otimes i)\cong (HI\cap R)/I=0$ by \cite[Proposition 1(2)]{M82}. Since $\Tor_1^R(R/I,H)$ is  $u$-$S$-torsion with respective to $s\in S$, $\Tor_1^R(R/I,H/R)$ is  $u$-$S$-torsion with respective to $s\in S$, which implies that $H/R$ is also  $u$-$S$-flat with respective to $s$.
\end{proof}

\begin{lemma}\label{dirc}
Let $R$ be a ring, $S$ is a regular  multiplicative subset of $R$. Suppose that $\{A_\lambda\mid \lambda\in\Lambda\}$ is a family of   $u$-$S$-flat modules with respective to $s\in S$, and that $B_\lambda$ is a submodule of $A_\lambda$ such that $A_\lambda/B_\lambda$ is  $u$-$S$-flat with respective to $s$ for each $ \lambda\in\Lambda$. Then   $\prod\limits_{\lambda\in\Lambda}A_\lambda$ is $u$-$S$-flat with respective to $s$ if and only if so is $\prod\limits_{\lambda\in\Lambda}B_\lambda$ and $\prod\limits_{\lambda\in\Lambda}A_\lambda/B_\lambda$.
\end{lemma}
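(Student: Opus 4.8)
The plan is to reduce everything to the $\Tor$-characterization of $u$-$S$-flatness. Since $S$ is regular, Proposition \ref{flat-FP-injective} tells us that a module $F$ is $u$-$S$-flat if and only if some element of $S$ annihilates $\Tor_1^R(R/K,F)$ for every finitely generated ideal $K$; moreover, by the direct-limit argument in the proof of $(2)\Rightarrow(1)$ of that proposition, whenever $F$ is $u$-$S$-flat with respect to $s$ the module $\Tor_1^R(M,F)$ is already killed by $s$ for \emph{every} $R$-module $M$, not merely the finitely presented ones. Abbreviating $\prod_{\lambda\in\Lambda}$ to $\prod$, the starting point is the product of the short exact sequences $0\to B_\lambda\to A_\lambda\to A_\lambda/B_\lambda\to 0$, which is again short exact: $0\to\prod B_\lambda\to\prod A_\lambda\to\prod (A_\lambda/B_\lambda)\to 0$. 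I will apply $\Tor_*^R(R/K,-)$ to it for an arbitrary finitely generated ideal $K$. The crucial simplification is that $R/K$ is finitely presented, so by \cite[Lemma 3.8]{gt} the functor $R/K\otimes_R-$ commutes with direct products; hence the induced maps on $R/K\otimes_R\prod(-)$ are, up to the natural isomorphism $R/K\otimes_R\prod M_\lambda\cong\prod(R/K\otimes_R M_\lambda)$, just products of the fibrewise maps, and their kernels are the products of the fibrewise kernels.

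The reverse implication is pure closure under extensions: if $\prod B_\lambda$ and $\prod (A_\lambda/B_\lambda)$ are $u$-$S$-flat, then in $\Tor_1^R(R/K,\prod B_\lambda)\to\Tor_1^R(R/K,\prod A_\lambda)\to\Tor_1^R(R/K,\prod (A_\lambda/B_\lambda))$ the outer terms are $u$-$S$-torsion, so the middle one is too, and $\prod A_\lambda$ is $u$-$S$-flat.

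For the forward implication I treat $\prod (A_\lambda/B_\lambda)$ first. Consider the exact piece
\[\Tor_1^R(R/K,\prod A_\lambda)\to\Tor_1^R(R/K,\prod (A_\lambda/B_\lambda))\xrightarrow{\partial}R/K\otimes_R\prod B_\lambda\to R/K\otimes_R\prod A_\lambda.\]
The left term is $u$-$S$-torsion because $\prod A_\lambda$ is $u$-$S$-flat. The image of $\partial$ is the kernel of the last map, which by the commutation with products equals $\prod\Ker(R/K\otimes_R B_\lambda\to R/K\otimes_R A_\lambda)$; each factor is a quotient of $\Tor_1^R(R/K,A_\lambda/B_\lambda)$, hence killed by $s$ since $A_\lambda/B_\lambda$ is $u$-$S$-flat with respect to $s$, and therefore $s$ annihilates the whole product. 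Thus $\Tor_1^R(R/K,\prod (A_\lambda/B_\lambda))$ is an extension of two $u$-$S$-torsion modules, so it is $u$-$S$-torsion and $\prod (A_\lambda/B_\lambda)$ is $u$-$S$-flat.

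The remaining term $\prod B_\lambda$ is the main obstacle, since the relevant piece is
\[\Tor_2^R(R/K,\prod (A_\lambda/B_\lambda))\to\Tor_1^R(R/K,\prod B_\lambda)\to\Tor_1^R(R/K,\prod A_\lambda),\]
which drags in a second $\Tor$. I will dispose of it by dimension shifting along $0\to K\to R\to R/K\to 0$, which gives $\Tor_2^R(R/K,\prod (A_\lambda/B_\lambda))\cong\Tor_1^R(K,\prod (A_\lambda/B_\lambda))$. Having just shown $\prod (A_\lambda/B_\lambda)$ to be $u$-$S$-flat, and using the strengthened characterization recalled above with the arbitrary module $K$ placed in the other slot (symmetry of $\Tor$), this group is $u$-$S$-torsion. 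As $\Tor_1^R(R/K,\prod A_\lambda)$ is $u$-$S$-torsion as well, $\Tor_1^R(R/K,\prod B_\lambda)$ is squeezed between two $u$-$S$-torsion modules and is hence $u$-$S$-torsion, so $\prod B_\lambda$ is $u$-$S$-flat. Throughout, the uniform witness is only replaced by a fixed power of $s$, which is harmless since $S$ is multiplicatively closed.
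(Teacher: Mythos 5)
Your proof is correct and takes essentially the same route as the paper's: both arguments apply the $\Tor^R_\ast(R/K,-)$ long exact sequence to the product of the short exact sequences, identify the kernel of $R/K\otimes_R\prod\limits_{\lambda}B_\lambda\rightarrow R/K\otimes_R\prod\limits_{\lambda}A_\lambda$ as a product of fibrewise kernels each killed by the single element $s$ (you via commutation of $R/K\otimes_R-$ with products from \cite{gt}, the paper via the explicit isomorphism with $\prod\limits_{\lambda}(B_\lambda\cap IA_\lambda)/IB_\lambda$ using finite generation of $I$), and then dispose of $\prod\limits_{\lambda}B_\lambda$ through the $\Tor_2$ term attached to the quotient product. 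The only differences are presentational: you spell out the dimension shifting and the extension of uniform $S$-torsion of $\Tor_1$ from finitely presented to arbitrary modules, steps the paper compresses into its citation of \cite[Theorem 3.2]{zwz21}.
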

\begin{proof} Let   $I$ be a  finitely generated ideal of $R$. Then there is an exact sequence $$\Tor_2^R(R/I,\prod\limits_{\lambda\in\Lambda}A_\lambda/B_\lambda) \rightarrow \Tor_1^R(R/I,\prod\limits_{\lambda\in\Lambda}B_\lambda) \rightarrow \Tor_1^R(R/I,\prod\limits_{\lambda\in\Lambda}A_\lambda).$$
By \cite[Theorem 3.2]{zwz21}, we just need to show $\prod\limits_{\lambda\in\Lambda}A_\lambda/B_\lambda$ is $u$-$S$-flat with respective to $s$. Consider the following exact sequence  $$\Tor_1^R(R/I,\prod\limits_{\lambda\in\Lambda}A_\lambda)\rightarrow \Tor_1^R(R/I,\prod\limits_{\lambda\in\Lambda}A_\lambda/B_\lambda)\rightarrow R/I\otimes_R \prod\limits_{\lambda\in\Lambda}B_\lambda\xrightarrow{f} R/I\otimes_R \prod\limits_{\lambda\in\Lambda}A_\lambda.$$

Since $\Tor_1^R(R/I,\prod\limits_{\lambda\in\Lambda}A_\lambda)$ is $u$-$S$-torsion with respective to $s$, to show $\prod\limits_{\lambda\in\Lambda}B_\lambda$ is  $u$-$S$-flat with respective to $s$, we just need to show $\Ker(f)$ is $u$-$S$-torsion with respective to $s$.
Note that
$\Ker(f)\cong (\prod_{\lambda\in\Lambda}B_\lambda\cap I(\prod_{\lambda\in\Lambda}A_\lambda))/I \prod_{\lambda\in\Lambda}B_\lambda
\cong \prod_{\lambda\in\Lambda} (B_\lambda\cap IA_\lambda)/I B_\lambda$
as $I$ is finitely generated.  Consider the following exact sequence  $\Tor_1^R(R/I,A_\lambda)\rightarrow \Tor_1^R(R/I,A_\lambda/B_\lambda)\rightarrow R/I\otimes_R B_\lambda\xrightarrow{f_\lambda} R/I\otimes_R \prod A_\lambda.$ We have $\Ker(f_\lambda)\cong (B_\lambda\cap IA_\lambda)/I B_\lambda$ is $u$-$S$-torsion with respective to $s$ since $A_\lambda/B_\lambda$ is  $u$-$S$-flat with respective to $s$ . So $\Ker(f)\cong \prod_{\lambda\in\Lambda} \Ker(f_\lambda)$ is $u$-$S$-torsion with respective to $s$.
 \end{proof}

\begin{theorem}\label{phi-coh-fp} \textbf{$($Matlis Theorem for $u$-$S$-coherent rings$)$}
Let $R$ be a ring and $S$ is a regular  multiplicative subset of $R$.  Then the following statements are equivalent.
\begin{enumerate}
    \item $R$ is a $u$-$S$-coherent ring.
    \item  there are $s_1,s_2\in S$ such that  $\Hom_R(M,E)$ is $u$-$S$-flat with respective to $s_1$ for any  $M$ a  $u$-$S$-absolutely pure module  with respective to $s_2$  and any injective module $E$.
   \item   there are $s_1,s_2\in S$ such that $\Hom_R(M,E)$ is $u$-$S$-flat   with respective to $s_1$ for any $M$  $u$-$S$-injective module with respective to $s_2$    and any injective module $E$.
   \item   there is $s_1,s_2\in S$ such that if $E$  is injective cogenerators, then   $\Hom_R(M,E)$ is $u$-$S$-flat  with respective to $s_1$ for any $M$  $u$-$S$-injective module with respective to $s_2$.
       \item  there are $s_1,s_2\in S$ such that  $\Hom_R(\Hom_R(M,E_1),E_2)$ is $u$-$S$-flat  with respective to $s_1$  for any $M$ a $u$-$S$-flat module with respective to $s_2$     and any injective modules $E_1,\ E_2$.
         \item  there are $s_1,s_2\in S$ such that   if $E_1$ and $E_2$ are injective cogenerators, then $\Hom_R(\Hom_R(M,E_1),E_2)$ is $u$-$S$-flat with respective to $s_1$   for any  $M$ a $u$-$S$-flat module with respective to $s_2$.
\item   there is $s\in S$ such that if $E_1$is an injective cogenerator, then $\Hom_R(E_1,E_2)$ is $u$-$S$-flat  with respective to $s$ for  any injective cogenerator$E_2$.
\end{enumerate}
\end{theorem}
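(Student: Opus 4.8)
The plan is to prove the two substantive implications $(1)\Rightarrow(2)$ and $(7)\Rightarrow(1)$ and to glue the remaining conditions on by short, essentially formal, steps, arranging everything into the chains $(1)\Rightarrow(2)\Rightarrow(3)\Rightarrow(4)\Rightarrow(7)$ and $(1)\Rightarrow(5)\Rightarrow(6)\Rightarrow(7)$ together with $(7)\Rightarrow(1)$. The formal steps are: $(2)\Rightarrow(3)$ because every $u$-$S$-injective module is $u$-$S$-absolutely pure; $(3)\Rightarrow(4)$ and $(5)\Rightarrow(6)$ because injective cogenerators are in particular injective; $(4)\Rightarrow(7)$ by feeding the injective (hence $u$-$S$-injective) cogenerator $E_1$ into $(4)$; and $(6)\Rightarrow(7)$ by specializing $M=R$, since $\Hom_R(R,E_1)=E_1$. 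The bookkeeping of the constants $s_1,s_2$ is routine, as each passage composes only finitely many of them and the single coherence constant $s$ serves uniformly across all finitely generated ideals.

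For $(1)\Rightarrow(2)$, I would test $u$-$S$-flatness of $G:=\Hom_R(M,E)$ through Proposition~\ref{flat-FP-injective}(9): it suffices to show $\Tor_1^R(R/K,G)$ is uniformly $S$-torsion, with a single constant, for every finitely generated ideal $K$. Since $R$ is $u$-$S$-coherent with respect to $s$, the ideal $K$ and hence $R/K$ are $u$-$S$-finitely presented with respect to $s$, so $R/K$ is $u$-$S$-isomorphic to a genuinely finitely presented module $N$ (Theorem~\ref{prop-usfp}); as $\Tor_1^R(-,G)$ carries $u$-$S$-isomorphisms to $u$-$S$-isomorphisms, $\Tor_1^R(R/K,G)$ is $u$-$S$-isomorphic to $\Tor_1^R(N,G)$. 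For the finitely presented $N$ and injective $E$ the standard adjunction gives a natural isomorphism $\Tor_1^R(N,\Hom_R(M,E))\cong\Hom_R(\Ext_R^1(N,M),E)$. Because $M$ is $u$-$S$-absolutely pure with respect to $s_2$, $\Ext_R^1(N,M)$ is uniformly $S$-torsion with respect to $s_2$, hence so is its dual into $E$ by \cite[Lemma 4.2]{QKWCZ21}; tracing the constants back through the $u$-$S$-isomorphism yields $\Tor_1^R(R/K,G)$ uniformly $S$-torsion with a constant independent of $K$, and Proposition~\ref{flat-FP-injective} finishes. The implication $(1)\Rightarrow(5)$ then follows at once: for $u$-$S$-flat $M$ and injective $E_1$, Proposition~\ref{flat-FP-injective} makes $\Hom_R(M,E_1)$ $u$-$S$-injective, and applying the already-proved $(1)\Rightarrow(3)$ to it gives that $\Hom_R(\Hom_R(M,E_1),E_2)$ is $u$-$S$-flat.

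The crux is $(7)\Rightarrow(1)$, which I would route through the Chase Theorem~\ref{w-coh-chase}: it suffices to prove that every product $\prod_\Lambda R$ is $u$-$S$-flat with respect to a fixed $s$. Fix an injective cogenerator $E$ and set $H=\Hom_R(E,E)$; the multiplication map gives $R\hookrightarrow H$, and by $(7)$ with $E_1=E_2=E$ the module $H$ is $u$-$S$-flat with respect to $s$, whence $H/R$ is $u$-$S$-flat with respect to $s$ by Lemma~\ref{qus-usf}. The key observation is the identification
$$\prod_\Lambda H=\prod_\Lambda\Hom_R(E,E)\cong\Hom_R\big(E,\textstyle\prod_\Lambda E\big)=\Hom_R(E,E^\Lambda),$$
in which $E^\Lambda$ is again an injective cogenerator; hence $(7)$, now with $E_2=E^\Lambda$, tells us $\prod_\Lambda H$ is $u$-$S$-flat with respect to the same $s$. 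Applying Lemma~\ref{dirc} to $A_\lambda=H$, $B_\lambda=R$, $A_\lambda/B_\lambda=H/R$, the flatness of $\prod_\Lambda A_\lambda=\prod_\Lambda H$ forces $\prod_\Lambda B_\lambda=\prod_\Lambda R$ to be $u$-$S$-flat with respect to $s$; since this holds for every $\Lambda$ with one fixed $s$, Theorem~\ref{w-coh-chase} yields that $R$ is $u$-$S$-coherent.

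I expect the main obstacle to be exactly this last implication, and within it the two points that make it work: first, spotting that $\prod_\Lambda\Hom_R(E,E)$ is itself a $\Hom$ out of the injective cogenerator $E$ into the injective cogenerator $E^\Lambda$, so that the single hypothesis $(7)$ can be reused for arbitrary index sets; and second, pinning every invocation (hypothesis $(7)$, Lemma~\ref{qus-usf}, Lemma~\ref{dirc}, and the Chase criterion) to one and the same $s\in S$, which is what makes the conclusion genuinely uniform. Verifying that $\Tor_1^R(-,G)$ respects $u$-$S$-isomorphisms and the constant-tracking in $(1)\Rightarrow(2)$ are the secondary technical checks.
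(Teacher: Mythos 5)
Your gluing implications and your proof of $(7)\Rightarrow(1)$ are correct and essentially identical to the paper's argument: the same use of $H=\Hom_R(E,E)$, Lemma \ref{qus-usf}, the identification $\prod_{\Lambda}H\cong\Hom_R(E,E^{\Lambda})$ with $E^{\Lambda}$ again an injective cogenerator, Lemma \ref{dirc}, and Theorem \ref{w-coh-chase}. Your auxiliary claim that $\Tor_1^R(-,G)$ sends $u$-$S$-isomorphisms to $u$-$S$-isomorphisms is also fine (a routine long-exact-sequence argument).

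The gap is in $(1)\Rightarrow(2)$, at the step where you invoke ``the standard adjunction'' $\Tor_1^R(N,\Hom_R(M,E))\cong\Hom_R(\Ext_R^1(N,M),E)$ for a merely finitely presented $N$ and injective $E$. This is not a theorem. Taking a presentation $0\to L\to P_0\to N\to 0$ with $P_0$ finitely generated free and $L$ finitely generated, the natural comparison map $\Tor_1^R(\Hom_R(M,E),N)\to\Hom_R(\Ext_R^1(N,M),E)$ is in general only an epimorphism; it is an isomorphism when $L$ is itself finitely presented (i.e., $N$ is of type ${\rm FP}_2$), which finite presentation of $N$ alone does not give. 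An epimorphism points the wrong way for you: you need uniform torsionness of the target $\Hom_R(\Ext_R^1(N,M),E)$ to lift to the source $\Tor_1^R(N,\Hom_R(M,E))$, which requires ($u$-$S$-)injectivity of the comparison map. Worse, validity of your claimed isomorphism for all finitely presented $N$, all $M$ and all injective $E$ is \emph{equivalent} to coherence of $R$: if it held, then for injective $M$ one would get $\Tor_1^R(\Hom_R(M,E),N)\cong\Hom_R(\Ext_R^1(N,M),E)=0$ for every finitely presented $N$, so $\Hom_R(M,E)$ would be flat for all injective $M,E$, forcing $R$ coherent by Matlis' theorem --- precisely the classical result this theorem generalizes. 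Since $u$-$S$-coherent rings need not be coherent (Example \ref{not-coh}), the finitely presented module $N$ you extract from $u$-$S$-coherence can genuinely fail this duality, and no bookkeeping of constants can repair that.

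The paper's proof of $(1)\Rightarrow(2)$ is exactly the workaround you are missing: it never asserts a degree-one duality for an abstract finitely presented module. Instead, from the exact sequence $0\to T'\to K\xrightarrow{f} I\to T\to 0$ with $K$ finitely presented it sets $K'=\Im(f)\subseteq I\subseteq R$, and, because $K'$ is an ideal, both $\Tor_1^R(\Hom_R(M,E),R/K')$ and $\Hom_R(\Ext_R^1(R/K',M),E)$ are realized as kernels of degree-zero comparison maps $\psi_{K'}$ and $\psi_R$. Only the degree-zero Lenzing isomorphism $\psi_K$ for the honestly finitely presented $K$ is used, and three diagram chases (absorbing the uniformly bounded error terms $T,T'$) show the degree-one comparison $\psi^1_{R/I}$ is a $u$-$S$-isomorphism with respect to a power of $s$; combined with $u$-$S$-absolute purity of $M$ this gives the required uniform torsion of $\Tor_1^R(\Hom_R(M,E),R/I)$. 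To complete your proof you must reproduce an argument of this kind rather than cite the degree-one duality as standard.
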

\begin{proof}
$(2)\Rightarrow (3)\Rightarrow (4)\Rightarrow (7)$ and $(5)\Rightarrow (6)$: Trivial.

$(3)\Leftrightarrow (5)$ and $(4)\Leftrightarrow (6)$: Follows from Proposition \ref{flat-FP-injective}.

 $(1)\Rightarrow (2)$: Suppose $R$ is a uniformly $S$-coherent ring with respective to some element $s\in S$. Let   $I$ be a  finitely generated ideal of $R$. Then we have an exact sequence $0\rightarrow T'\rightarrow K\xrightarrow{f} I\rightarrow T\rightarrow 0$ with $K$ finitely presented and $sT=sT'=0$. Set $\Im(f)=K'$.   Consider the following commutative diagrams with exact rows ($(-,-)$ is instead of $\Hom_R(-,-)$):
 $$\xymatrix@R=20pt@C=15pt{
(M,E)\otimes_R T'\ar[r]^{} \ar[d]^{\psi^1_{T'}} &(M,E)\otimes_R K \ar[d]_{\psi_{K}}^{\cong}\ar[r]^{} & (M,E)\otimes_R K' \ar[d]_{\psi_{K'}}\ar[r]^{}&0\\
((T',M),E)\ar[r]^{} & ((K,M),E) \ar[r]^{} &((K',M),E)\ar[r]^{} &0,\\ }$$

$$\xymatrix@R=20pt@C=15pt{
0\ar[r]^{}&\Tor_1^R((M,E),R/K') \ar[r]^{}\ar[d]^{\psi^1_{R/K'}} &(M,E)\otimes_R K' \ar[d]_{\psi_{K'}}\ar[r]^{} & (M,E)\otimes_R R \ar[d]_{\psi_{R}}^{\cong}\ar[r]^{} &(M,E)\otimes_R R/K' \ar[d]_{\psi_{R/K'}}^{}\ar[r]^{} &0\\
0\ar[r]&(\Ext_R^1(R/K',M),E) \ar[r]^{} &((K',M),E)\ar[r]^{} & ((R,M),E) \ar[r]^{} &((R/K',M),E)\ar[r]^{} &0\\ }$$
and
$$\xymatrix@R=20pt@C=15pt{
\Tor_1^R((M,E),T)\ar[d]\ar[r]^{}&\Tor_1^R((M,E),R/K') \ar[r]^{}\ar[d]^{\psi^1_{R/K'}} &\Tor_1^R((M,E),R/I) \ar[d]_{\psi^1_{R/I}}\ar[r]^{} & (M,E)\otimes_R T \ar[d]\\
(\Ext_R^1(T,M),E)\ar[r]&(\Ext_R^1(R/K',M),E) \ar[r]^{} &(\Ext_R^1(R/I,M),E)\ar[r]^{} & ((T,M),E) \\ }$$ Since $\psi_{K}$ is an isomorphism by  \cite[Proposition 8.14(1)]{hh} and \cite[Theorem 2]{ELMUT1969},  $\psi_{K'}$ is a  $u$-$S$-isomorphism with respective to $s$, and so is $\psi^1_{R/K'}$. Then  $\psi^1_{R/I}$  is a  $u$-$S$-isomorphism with respective to $s^3$ (see the proof of \cite[Theorem 1.2]{z21-swd}). Since $M$ is $u$-$S$-absolutely pure, $\Ext_R^1(R/I,M)$ is $u$-$S$-torsion with respective to  $s_2$ ($s_2$ is independent of $I$). Then $\Tor_1^R(\Hom_R(M,E),R/I)$ is $u$-$S$-torsion with respective to  $s_1:=s^3s'$, and thus $\Hom_R(M,E)$ is $u$-$S$-flat  with respective to  $s_1$ by Proposition \ref{flat-FP-injective}.

$(7)\Rightarrow (1)$: Let $E$ be an injective cogenerator and set
$H = \Hom_R(E,E)$. Then $H$ is $u$-$S$-flat with respective to $s$ by assumption.  Since $R\subseteq H$, we have that $H/R$ is $u$-$S$-flat with respective to $s$ by Lemma \ref{qus-usf}. Let $\Lambda$ be an index set. Set $ H_{\lambda} = H$, $R_{\lambda}= R$ and $E_{\lambda} = E$ for any $\lambda\in \Lambda$. Since $\prod\limits_{\lambda\in \Lambda}E_{\lambda} $ is also a injective cogenerator, $\prod\limits_{\lambda\in \Lambda} H_{\lambda}\cong \Hom_R(E_{\lambda},\prod\limits_{\lambda\in \Lambda}E_{\lambda} )$ is $u$-$S$-flat with respective to $s$ by assumption. Hence $\prod\limits_{\lambda\in \Lambda}R_{\lambda}$ is $u$-$S$-flat with respective to $s$ by Lemma \ref{dirc}. So $R$ is a $u$-$S$-coherent ring by Theorem \ref{w-coh-chase}.
\end{proof}

%\begin{acknowledgement}\quad\\
%The second author was supported by National Natural Science Foundation of China (No. 12201361).
%\end{acknowledgement}

\bigskip

\end{document}